\title{Introduction to Robust Power Domination}
\author{Beth Bjorkman\thanks{Air Force Research Laboratory, beth.morrison@us.af.mil} \and 
        Esther Conrad\thanks {Iowa State University, edconrad@iastate.edu}}
\date{December 15, 2022}
\newtheorem{thm}{Theorem}[section]
\newtheorem{cor}[thm]{Corollary}
\newtheorem{prop}[thm]{Proposition}
\newtheorem{obs}[thm]{Observation}
\theoremstyle{definition}
\newtheorem{rem}[thm]{Remark}
\theoremstyle{definition}
\newtheorem{defn}[thm]{Definition}
\theoremstyle{definition}
\newcommand{\lb}{\left\{}
\newcommand{\rb}{\right\}}
\newcommand{\lf}{\left\lfloor}
\newcommand{\rf}{\right\rfloor}
\newcommand{\lc}{\left\lceil}
\newcommand{\rc}{\right\rceil}
\newcommand{\Mod}[1]{\ (\mathrm{mod}\ #1)}
\newcommand{\ben}{\begin{enumerate}}
\newcommand{\een}{\end{enumerate}}
\newcommand{\lp}{\left (}
\newcommand{\rp}{\right )}
\newcommand{\gpk}[1]{\ddot{\gamma}_P^k\left(#1\right)}%%%%
\newcommand{\gpkset}{\ddot{\gamma}_P^k\text{-set}}
\newcommand{\gpkplus}[1]{\ddot{\gamma}_P^{k+1}\left(#1\right)}%%%%
\newcommand{\gpkother}[2]{\ddot{\gamma}_P^{#1}\left(#2\right)}%%%%
\newcommand{\gpkotherset}[1]{\ddot{\gamma}_P^{#1}\text{-set}}
 \newcommand{\gp}[1]{\gamma_P\left(#1\right)}
\renewcommand{\sp}[1]{\operatorname{sp}\left(#1\right)}
\newcommand{\pmus}[1]{\operatorname{\#PMU}\left(#1\right)}
\newcommand{\pmuss}[2]{\operatorname{\#PMU}_{#1}\left(#2\right)}
\renewcommand{\deg}[2]{\operatorname{deg}_{#2}\left(#1\right)}
\newcommand{\bigpds}[1]{s \lp #1\rp}
\newcommand{\ds}{\displaystyle}
\newcommand{\vftk}[1]{{\gamma}_P^k\left(#1\right)}%%%%
\newcommand{\vftother}[2]{{\gamma}_P^{#1}\left(#2\right)}%%%%
\colorlet{outline}{blue!100}
\colorlet{pmu}{blue!30!}
\begin{document}
\maketitle
\begin{abstract}
    
   Sensors called phasor measurement units (PMUs) are used to monitor the electric power network. The power domination problem seeks to minimize the number of PMUs needed to monitor the network. We extend the power domination problem and consider the minimum number of sensors and appropriate placement to ensure monitoring when $k$ sensors are allowed to fail with multiple sensors allowed to be placed in one location. That is, what is the minimum multiset of the vertices, $S$, such that for every $F\subseteq S$ with $|F|=k$, $S\setminus F$ is a power dominating set. Such a set of PMUs is called a \emph{$k$-robust power domination set}. This paper generalizes the work done by Pai, Chang and Wang in 2010 on vertex-fault-tolerant power domination, which did not allow for multiple sensors to be placed at the same vertex. We provide general bounds and determine the $k$-robust power domination number of some graph families.
    \end{abstract}
    \textbf{Keywords:} robust power domination, power domination, tree\\
    \textbf{AMS subject classification:} 05C69, 05C85, 68R10, 94C15
\section{Introduction}

The power domination problem seeks to find the placement of the mimimum number of sensors called phasor measurement units (PMUs) needed to monitor an electric power network. In \cite{hhhh02}, Haynes et al. defined the power domination problem in graph theoretic terms by placing PMUs at a set of initial vertices and then applying observation rules to the vertices and edges of the graph. This process was simplified by Brueni and Heath in \cite{bh05}.

%The \textit{power domination problem} seeks to minimize the number of sensors called phasor measurement units (PMUs) used to monitor the electrical power network. In \cite{hhhh02}, Haynes et al. defined the power domination problem as a graph-theoretic problem. In this model, each vertex represents an electrical node where transmission lines are connected and each edge represents a transmission line joining two electrical nodes. The PMUs are placed on the vertices and observation rules are applied to both the vertices and the edges of the graph:  
%A PMU monitors the vertex where it is placed as well as the adjacent vertices and the edges incident to the vertex where it is placed. Ohm's law and Kirchoff's law are used to solve a system of equations to monitor one unknown edge and one unknown vertex at a time. 
%This process, as defined in \cite{hhhh02}, consisted of five rules focusing on the observation of both the edges and the vertices. In \cite{bh05}, Brueni and Heath simplified the process to two rules that focus solely on the observation of the vertices. The vertex-only observation process was shown to be equivalent to the original definition. \\

Pai, Chang, and Wang \cite{pcw10} generalized power domination to create \emph{vertex-fault-tolerant power domination} in 2010 to model the possibility of sensor failure. The $k$-\textit{fault-tolerant power domination problem} seeks to find the minimum number of PMUs needed to monitor a power network (and their placements) given that any $k$ of the PMUs will fail. The vertex containing the failed PMU remains in the graph, as do its edges; it is only the PMU that fails. This generalization allows for the placement of only one PMU per vertex.

We consider the related problem of the minimum number of PMUs needed to monitor a power network given that $k$ PMUs will fail \emph{but also allow for multiple PMUs to be placed at a given vertex}. We call this \emph{PMU-defect-robust power domination}, as it is not the vertices that cause a problem with monitoring the network, but the individual PMUs themselves. This models potential synchronization issues, sensor errors, or malicious interference with the sensor outputs.

    To demonstrate the difference between vertex-fault-tolerant power domination and PMU-defect-robust power domination and how drastic the difference between these two parameters can be, consider the star on $16$ vertices with $k= 1$, shown in Figure \ref{fig:starmulti}. Notice that in vertex-fault-tolerant power domination, if one PMU is placed in the center of the star and this PMU fails, then all but one of the leaves must have PMUs in order to still form a power dominating set. However, with PMU-defect-robust power domination, placing two PMUs in the center is sufficient to ensure that even if one PMU fails, the power domination process will still observe all of the vertices. \\
    %%%%%%%%%% Stars pictures
    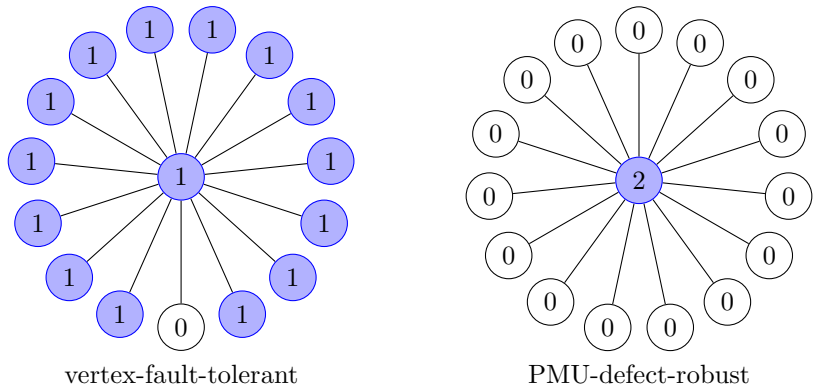
\begin{figure}[htbp]
    \begin{center}
    \begin{tabular}{ccc}
    \begin{tikzpicture}[shorten >=9pt, shorten <=9pt, scale=1]
    \foreach \a in {1,2,...,15}{
        \coordinate (\a) at (\a*360/15-90:2cm);
    }
    \foreach \b in {1,2,...,14}{
        \draw[shorten >=9pt, shorten <=9pt] (0,0) -- (\b);
        \node[circle, draw=outline, fill=pmu] at (\b) {$1$};}
    \foreach \b in {15}{
        \draw[shorten >=9pt, shorten <=9pt] (0,0) -- (\b);
        \node[circle, draw=black, fill=white] at (\b) {$0$};}    
    \node[circle, draw=outline, fill=pmu] at (0,0) {$1$};
    \end{tikzpicture}
    &
    \hspace{0.25in}
    &
    \begin{tikzpicture}[shorten >=9pt, shorten <=9pt, scale=1]
    \foreach \a in {1,2,...,15}{
        \coordinate (\a) at (\a*360/15+90:2cm);
    }
    \foreach \b in {1,2,...,15}{
        \draw[shorten >=9pt, shorten <=9pt] (0,0) -- (\b);
        \node[circle, draw=black, fill=white] at (\b) {$0$};}
    \node[circle, draw=outline, fill=pmu] at (0,0) {$2$}; 
    \end{tikzpicture}\\
    vertex-fault-tolerant &\hspace{0.25in}& PMU-defect-robust \\
    %$\vftother{1}{S_{16}}=15$ &\hspace{0.25in}& $\gpkother{1}{S_{16}}=2$
    \end{tabular}
    \end{center}
    \caption{A minimum vertex-fault-tolerant power dominating set and a minimum PMU-defect-robust power dominating set shown for a star when $k=1$.}
    \label{fig:starmulti}
    \end{figure}

    In Section \ref{sec:prelimkrpds}, we review definitions from past work and formally define PMU-defect-robust power domination. We also include some basic results in that section. Section \ref{sec:boundskrpds} consists of general bounds for $k$-robust power domination and in Section \ref{sec:completebipartite} we demonstrate the tightness of these bounds with a family of complete bipartite graphs. In Section \ref{sec:trees} we establish the $k$-robust power domination number for trees. Section \ref{sec:concrem} contains concluding remarks, including suggestions for future work.

\section{Preliminaries}\label{sec:prelimkrpds}
    
We begin by giving relevant graph theory definitions. Then we define power domination, vertex-fault-tolerant power domination, and PMU-defect-robust power domination. Finally, we include useful properties of the floor and ceiling functions.

\subsection{Graph Theory}

A graph $G$ is a set of vertices, $V(G)$, and a set of edges, $E(G)$.  Each (unordered) edge consists of a set of two distinct vertices; the edge $\{u,v\}$ is often written as $uv$. When $G$ is clear, we write $V=V(G)$ and $E=E(G)$. A \emph{path} from $v_1$ to $v_{\ell+1}$ is a sequence of vertices and edges $v_1, e_1, v_2, e_2, \ldots, v_\ell, e_\ell, v_{\ell+1}$ so that the $v_i$ are distinct vertices and $v_i\in e_i$ for all $i$ and $v_i\in e_{i-1}$ for all $i\geq 2$. Such a path has \emph{length} $\ell$. The \emph{distance} between vertices $u$ and $v$ is the minimum length of a path between $u$ and $v$. A graph $G$ is \emph{connected} if there is a path from any vertex to any other vertex. \emph{Throughout what follows, we consider only graphs that are connected.}

We say that vertices $u$ and $v$ are \emph{neighbors} if $uv\in E$. The \emph{neighborhood} of $u\in V$ is the set containing all neighbors of $u$ and is denoted by $N(u)$. The \emph{closed neighborhood} of $u$ is $N[u]=N(u)\cup \{u\}$. The \emph{degree} of a vertex $u\in V$ is the number of edges that contain $u$, that is, $\deg{u}{G} = |N(u)|$. When $G$ is clear, we omit the subscript.  The \emph{maximum degree} of a graph $G$ is $\Delta\left(G\right) = \ds \max_{v\in V} \deg{v}{}$.

A \emph{subgraph} $H$ of a graph $G$ is a graph such that $V(H)\subseteq V(G)$ and $E(H)\subseteq E(G)$. An \emph{induced subgraph} $H$ of a graph $G$, denoted $H=G[V(H)]$, is a graph with vertex set $V(H)\subseteq V(G)$ and edge set $E(H)=\{ uv : u,v\in V(H) \text{ and } uv\in E(G)\}$.
 
We refer the reader to \textit{Graph Theory} by Diestel \cite{diestelbook} for additional graph terminology not detailed here.

\subsection{Power domination, vertex-fault-tolerant power domination, and PMU-defect-robust power domination}

What follows is an equivalent statement of the power domination process as defined in \cite{hhhh02}, and established by \cite{bh05}.

The \emph{power domination process} on a graph $G$ with initial set $S\subseteq V$ proceeds recursively by:
\begin{enumerate}
\item $B = \ds \bigcup_{v\in S} N[v]$
\item While there exists $v\in B$ such that exactly one neighbor, say $u$, of $v$ is \emph{not} in $B$, add $u$ to $B$. 
\end{enumerate}
Step 1 is referred to as the \emph{domination step} and each repetition of step 2 is called a \emph{zero forcing step}.  During the process, we say that a vertex in $B$ is \emph{observed} and a vertex not in $B$ is \emph{unobserved}. A \emph{power dominating set} of a graph $G$ is an initial set $S$ such that $B=V(G)$ at the termination of the power domination process. The \emph{power domination number} of a graph $G$ is the minimum cardinality of a power dominating set of $G$ and is denoted by $\gp{G}$. %A minimum power dominating set gives an optimal placement of PMUs in the graph.

In \cite{pcw10}, Pai, Chang, and Wang define the following variant of power domination. For a graph $G$ and an integer $k$ with $0\leq k \leq |V|$, a set $S\subseteq V$ is called a \emph{$k$-fault-tolerant power dominating set of $G$} if $S\setminus F$ is still a power dominating set of $G$ for any subset $F\subseteq V$ with $|F|\leq k$.  The \emph{$k$-fault-tolerant power domination number}, denoted by $\vftk{G}$, is the minimum cardinality of a $k$-fault-tolerant power dominating set of $G$. 

While $k$-fault-tolerant power domination allows us to examine what occurs when a previously chosen PMU location is no longer usable (yet the vertex remains in the graph), it is also interesting to study when an individual PMU fails. That is, allow for multiple PMUs to be placed at the same location and consider if a subset of the PMUs fail. This also avoids issues with poorly connected graphs, such as in Figure \ref{fig:starmulti}, where $\vftother{1}{G}$ may be close to the number of vertices of $G$. Thus we define \emph{PMU-defect-robust power domination} as follows.  

\begin{defn}
For a given graph $G$ and integer $k\geq 0$, we say that a multiset $S$, each of whose elements is in $V$, is a \emph{$k$-robust power dominating set} of $G$ if $S\setminus F$ is a power dominating set of $G$ for any submultiset $F$ of $S$ with $|F|=k$. We shorten $k$-robust power dominating set of $G$ to $k$-rPDS of $G$. The size of a minimum $k$-rPDS is denoted by $\gpk{G}$ and such a multiset is also referred to as a $\gpkset$ of $G$. The \emph{number of  PMUs} at a vertex $v\in S$ is its multiplicity in $S$, denoted by $\pmuss{S}{v}$, or when $S$ is clear, by $\pmus{v}$. \end{defn}

There are several observations that one can quickly make.

\begin{obs}\label{obs:comparing}
    Let $G$ be a graph and $k\geq 0$. Then
    \begin{enumerate}
        \item $\gpkother{0}{G} = \vftother{0}{G}=\gp{G}$,
        \item $\gpk{G} \leq \vftk{G}$,
        \item $\gp{G}=1$ if and only if $\gpk{G} = k+1$.
    \end{enumerate}     
\end{obs}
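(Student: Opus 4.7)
The plan is to dispatch each of the three claims in turn by unpacking the definitions. For (1), when $k=0$ the fault-tolerant requirement ``$S\setminus F$ is a power dominating set for every $F\subseteq V$ with $|F|\le 0$'' and the robust requirement ``$S\setminus F$ is a power dominating set for every submultiset $F$ with $|F|=0$'' both collapse to ``$S$ is a power dominating set.'' In the robust multiset setting, assigning a vertex multiplicity greater than one does not help the power domination process (which only sees the underlying set), so a minimum $0$-rPDS may be realized with all multiplicities one, and hence both parameters equal $\gp{G}$.

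For (2), I would start with a minimum $k$-fault-tolerant power dominating set $S$ and view it as a multiset in which every element has multiplicity one. Any submultiset $F$ of $S$ with $|F|=k$ is then an ordinary subset of $V$ of size $k$, so the fault-tolerant hypothesis guarantees that $S\setminus F$ is a power dominating set. Thus $S$ also qualifies as a $k$-rPDS, which yields $\gpk{G}\le |S|=\vftk{G}$.

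The main content is (3). The key inequality is $\gp{G}\le \gpk{G}-k$: any $\gpkset$ $S$ of $G$, after deletion of any $k$ of its PMUs, is a power dominating set of size $\gpk{G}-k$, so the minimum PDS cannot be larger. Consequently $\gpk{G}\ge \gp{G}+k$, and if $\gpk{G}=k+1$ then $\gp{G}\le 1$; since $G$ is connected and nonempty, $\gp{G}\ge 1$, so $\gp{G}=1$. For the converse I would assume $\gp{G}=1$, pick a vertex $v$ with $\{v\}$ a power dominating set, and let $S$ be the multiset consisting of $k+1$ copies of $v$. Any submultiset $F$ of size $k$ leaves at least one copy of $v$ behind, so $S\setminus F$ still has underlying set containing $v$ and is therefore a power dominating set. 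This gives $\gpk{G}\le k+1$, matching the lower bound.

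Not much is difficult here; the only points I would be careful about are the subset/submultiset bookkeeping in (2), and the use of multiplicity in the upper-bound direction of the converse of (3), since without allowing repetition at $v$ the conclusion would fail (as illustrated by the star example in Figure \ref{fig:starmulti}).
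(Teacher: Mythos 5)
Your proposal is correct and is essentially the argument the authors intend: the paper states this as an unproved Observation, and your unpacking of the definitions for (1) and (2), together with the lower bound $\gpk{G}\geq\gp{G}+k$ and the $(k+1)$-copies-of-one-vertex construction for (3), is exactly the routine verification being omitted (the same lower bound reappears as Proposition \ref{prop:basicbounds}, and the multiset construction is the one illustrated in Figure \ref{fig:starmulti}). No gaps.
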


% \begin{obs}\label{obs:keq0}
% For any graph $G$, $\gpkother{0}{G}=\vftother{0}{G}=\gp{G}$.
% \end{obs}

% \begin{obs}\label{obs:gpkleqvft}
% For any graph $G$, $\gpk{G}\leq\vftk{G}$.
% \end{obs}

% \begin{obs}\label{obs:powdom1}
% For any graph $G$, $\gp{G}=1$ if and only if $\gpk{G} = k+1$ for all $k\geq 0$.
% \end{obs}

For any minimum $k$-rPDS, having more than $k+1$ PMUs at a single vertex is redundant. 

\begin{obs}\label{obs:numpmusleqk+1}
Let $G$ be a graph and $k\geq 0$. If $S$ is a $\gpkset$ of $G$, then for all $v\in S$ we have $\pmus{v} \leq k+1$.
\end{obs}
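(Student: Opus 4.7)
The plan is a short proof by contradiction. Suppose $S$ is a $\gpkset$ of $G$ and, for some $v \in S$, we have $\pmus{v} \geq k+2$. I would build a smaller $k$-rPDS by removing one copy of $v$ from $S$ to form a submultiset $S'$, so that $|S'| = |S| - 1$ and $\pmuss{S'}{v} \geq k+1$, then derive a contradiction with the minimality of $S$.

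The key step is to verify that $S'$ is itself a $k$-rPDS. I would take an arbitrary submultiset $F' \subseteq S'$ with $|F'| = k$. Since $S'$ is a submultiset of $S$, the multiset $F'$ is also a submultiset of $S$ of size $k$, so by hypothesis on $S$ the multiset $S \setminus F'$ is a power dominating set of $G$. Writing $S \setminus F' = (S' \setminus F') \cup \{v\}$ (as a multiset, with one extra copy of $v$), I would then use that $\pmuss{S'}{v} \geq k+1 > k = |F'|$ to conclude that at least one copy of $v$ survives in $S' \setminus F'$. Therefore the underlying vertex sets of $S \setminus F'$ and $S' \setminus F'$ coincide, and since the power domination process depends only on the underlying vertex set of the initial multiset, $S' \setminus F'$ is also a power dominating set of $G$.

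This shows $S'$ is a $k$-rPDS with $|S'| < |S|$, contradicting the minimality of $S$; hence $\pmus{v} \leq k+1$ for every $v \in S$. The only subtlety worth emphasizing in the write-up is the distinction between multiset containment (which governs which failure sets $F'$ are allowed) and the underlying vertex set (which is what the power domination process actually sees); once that is pinned down, the argument is essentially a one-liner counting copies of $v$.
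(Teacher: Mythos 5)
Your argument is correct and matches the idea the paper itself uses (the paper states this as an observation with only the remark that more than $k+1$ PMUs at one vertex is redundant, which is exactly your removal argument made precise). The one point you rightly flag --- that at least one copy of $v$ survives any failure set of size $k$, so the underlying vertex set seen by the power domination process is unchanged --- is the whole content of the observation, and you have it.
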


\subsection{Floor and ceiling functions}
Throughout what follows, recall the following rules for the floor and ceiling functions. Most can be found in Chapter 3 in \cite{knuthbook} and we provide proofs for the rest.

\begin{prop}{\rm \cite[Equation 3.11]{knuthbook}}\label{prop:ceilfracfix}
If $m$ is an integer, $n$ is a positive integer, and $x$ is any real number, then
\[ \left\lceil  \frac{\left\lceil x \right \rceil+m}{n} \right\rceil =\left\lceil  \frac{x+m}{n} \right\rceil. \]
\end{prop}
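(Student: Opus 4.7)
The plan is to prove the two inequalities $\left\lceil \frac{\lceil x \rceil + m}{n} \right\rceil \geq \left\lceil \frac{x+m}{n} \right\rceil$ and $\left\lceil \frac{\lceil x \rceil + m}{n} \right\rceil \leq \left\lceil \frac{x+m}{n} \right\rceil$ separately, using the defining universal property of the ceiling: for any integer $p$ and real $a$, we have $\lceil a \rceil \leq p$ if and only if $a \leq p$ (equivalently, $\lceil a \rceil$ is the least integer that is $\geq a$).

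First I would handle the easy direction. Since $\lceil x \rceil \geq x$ and $n > 0$, dividing by $n$ and adding $m/n$ preserves the inequality, so $\frac{\lceil x \rceil + m}{n} \geq \frac{x+m}{n}$. Monotonicity of the ceiling function then yields $\left\lceil \frac{\lceil x \rceil + m}{n} \right\rceil \geq \left\lceil \frac{x+m}{n} \right\rceil$.

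For the reverse inequality, I would set $q = \left\lceil \frac{x+m}{n} \right\rceil$, which is an integer satisfying $q \geq \frac{x+m}{n}$, equivalently $qn - m \geq x$. Because $m$ is an integer and $n$ is a positive integer, $qn - m$ is itself an integer that is $\geq x$. By the universal property of $\lceil x \rceil$ as the least such integer, we conclude $qn - m \geq \lceil x \rceil$, which rearranges to $q \geq \frac{\lceil x \rceil + m}{n}$. Applying the universal property once more (this time to bound the ceiling of $\frac{\lceil x \rceil + m}{n}$ above by the integer $q$) gives $\left\lceil \frac{\lceil x \rceil + m}{n} \right\rceil \leq q = \left\lceil \frac{x+m}{n} \right\rceil$.

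Combining the two inequalities proves the claimed equality. I do not anticipate a serious obstacle here; the only subtle point is making explicit use of the fact that $qn - m$ is an integer (which is what forces $qn - m \geq \lceil x \rceil$ rather than merely $qn - m \geq x$), and this is guaranteed by the hypotheses that $m \in \mathbb{Z}$ and $n \in \mathbb{Z}_{>0}$.
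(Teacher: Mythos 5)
Your proof is correct: the easy direction follows from monotonicity of the ceiling, and the key step in the reverse direction --- that $qn-m$ is an \emph{integer} at least $x$, hence at least $\lceil x\rceil$ --- is exactly the point that makes the identity work. The paper does not prove this proposition itself but only cites \cite[Equation 3.11]{knuthbook}, and your argument via the universal property of the ceiling is essentially the standard proof given there, so there is nothing to reconcile.
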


\begin{prop}{\rm \cite[Ch. 3 Problem 12]{knuthbook}}\label{prop:ceiltofloor}
If $m$ is an integer and $n$ is a positive integer, then
\[ \left\lceil  \frac{m}{n} \right\rceil =\left\lfloor  \frac{m-1}{n} \right\rfloor +1. \]
\end{prop}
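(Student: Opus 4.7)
The plan is to prove the identity by a direct case analysis based on the division algorithm. Write $m = qn + r$ with $q \in \mathbb{Z}$ and $0 \leq r < n$; this expresses $m/n = q + r/n$ in its "canonical" form, from which both the ceiling of $m/n$ and the floor of $(m-1)/n$ can be read off explicitly. The two cases $r = 0$ and $r \geq 1$ behave slightly differently, and each must be checked separately.

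In the first case, suppose $r = 0$, so $n$ divides $m$ and $\lceil m/n \rceil = q$. Then $m - 1 = (q-1)n + (n-1)$, and since $0 \leq n - 1 < n$ this is the canonical form for $m-1$, giving $\lfloor (m-1)/n \rfloor = q - 1$. Therefore $\lfloor (m-1)/n \rfloor + 1 = q = \lceil m/n \rceil$, as required.

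In the second case, suppose $1 \leq r < n$, so $\lceil m/n \rceil = q + 1$. Then $m - 1 = qn + (r - 1)$ with $0 \leq r - 1 < n$, so the canonical form of $m-1$ yields $\lfloor (m-1)/n \rfloor = q$ and hence $\lfloor (m-1)/n \rfloor + 1 = q + 1 = \lceil m/n \rceil$. Combining both cases gives the identity.

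There is no real obstacle here: the only subtlety is remembering to split off the divisibility case $r = 0$, since otherwise the expression $r - 1$ would fall outside the range $[0, n)$ needed to directly apply the floor. The proof is purely mechanical once the division algorithm is invoked, and does not use any of the power-domination machinery developed earlier in the paper.
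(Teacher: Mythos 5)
Your proof is correct: the division-algorithm decomposition $m = qn + r$ with the two cases $r = 0$ and $1 \leq r < n$ handles everything, and both cases check out. Note that the paper does not actually prove this proposition; it is cited directly from Knuth et al.\ (Chapter 3, Problem 12), so there is no in-paper argument to compare against---your elementary case analysis is a perfectly good self-contained verification, and correctly isolates the divisibility case that would otherwise push $r-1$ out of the range $[0,n)$.
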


\begin{prop}{\rm \cite[Equation 3.4]{knuthbook}}\label{prop:ceilneg}
For any real number $x$, $\lceil -x \rceil = - \lfloor x \rfloor$.
\end{prop}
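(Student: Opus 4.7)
The plan is to reduce both sides to the same integer using the defining inequalities of the floor and ceiling. I would start by setting $n = \lfloor x \rfloor$, which by definition is the unique integer satisfying $n \leq x < n+1$. Negating and reversing the inequalities then gives $-n-1 < -x \leq -n$.

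Next I would invoke the analogous characterization of the ceiling: $\lceil y \rceil$ is the unique integer $m$ with $m-1 < y \leq m$. Applied to $y = -x$, the chain above displays $m = -n$ as exactly that integer, so $\lceil -x \rceil = -n = -\lfloor x \rfloor$, which is the desired identity.

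There is no real obstacle here; the only point requiring care is the handling of strict versus non-strict inequalities when negating. Specifically, $n \leq x$ must become $-x \leq -n$ (non-strict) and $x < n+1$ must become $-n-1 < -x$ (strict) — otherwise the ceiling could be misidentified as $-n-1$. The integer boundary case $x \in \mathbb{Z}$ is automatically absorbed as the equality on the right end of the negated chain, giving $\lceil -x \rceil = -x = -\lfloor x \rfloor$ consistently.
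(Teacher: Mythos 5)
Your proof is correct: the characterizations $n \leq x < n+1$ for $n = \lfloor x \rfloor$ and $m-1 < y \leq m$ for $m = \lceil y \rceil$ are applied properly, and the care with strict versus non-strict inequalities under negation is exactly the point that matters. Note that the paper does not prove this proposition at all --- it is quoted directly from Knuth et al.\ (Equation 3.4) --- so your argument is simply the standard first-principles verification of that cited identity, and it is sound.
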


\begin{prop} \label{prop:ceilfuncbound}
If $x$ and $y$ are real numbers then
\[\lceil x \rceil + \lceil y \rceil -1 \leq \lceil x+y \rceil.\]
\end{prop}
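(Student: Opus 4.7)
The plan is to isolate the fractional deficits that make $\lceil x\rceil+\lceil y\rceil$ overshoot $\lceil x+y\rceil$, and then bound the total overshoot by $1$. I would start by writing $a=\lceil x\rceil - x$ and $b=\lceil y\rceil - y$, so that $a,b\in[0,1)$ and hence $a+b\in[0,2)$.

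Next I would substitute into the right-hand side. Because $\lceil x\rceil+\lceil y\rceil$ is an integer, it can be pulled out of the ceiling, giving
\[ \lceil x+y\rceil = \left\lceil \lceil x\rceil+\lceil y\rceil - (a+b)\right\rceil = \lceil x\rceil+\lceil y\rceil + \lceil -(a+b)\rceil. \]
Applying Proposition~\ref{prop:ceilneg} rewrites the last term as $-\lfloor a+b\rfloor$. Since $0\le a+b<2$ we have $\lfloor a+b\rfloor\in\{0,1\}$, which immediately yields $\lceil x+y\rceil \ge \lceil x\rceil+\lceil y\rceil - 1$.

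The only step that deserves care is the first move, pulling the integer $\lceil x\rceil+\lceil y\rceil$ out of the ceiling; this is the identity $\lceil n+t\rceil = n+\lceil t\rceil$ for any integer $n$ and real $t$, which follows from the definition of ceiling and also from Proposition~\ref{prop:ceilfracfix} upon taking the denominator equal to $1$. After that, the argument is purely arithmetic: the two possible values of $\lfloor a+b\rfloor$ correspond exactly to whether the fractional parts of $x$ and $y$ do or do not sum past $1$, and the latter case is where the inequality becomes tight.
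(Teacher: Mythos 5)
Your proof is correct, but it takes a different route from the paper's. The paper's proof is a two-line inequality argument: from $\lceil x\rceil-1<x$ and $\lceil y\rceil-1<y$ it gets $\lceil x\rceil+\lceil y\rceil-2<x+y\leq\lceil x+y\rceil$, and then converts the strict inequality between integers into the desired $\leq$ with a $-1$. You instead compute $\lceil x+y\rceil$ \emph{exactly}: writing $a=\lceil x\rceil-x$ and $b=\lceil y\rceil-y$ and pulling the integer $\lceil x\rceil+\lceil y\rceil$ out of the ceiling, you obtain the identity $\lceil x+y\rceil=\lceil x\rceil+\lceil y\rceil-\lfloor a+b\rfloor$ via Proposition~\ref{prop:ceilneg}, and the bound follows since $\lfloor a+b\rfloor\in\{0,1\}$. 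Each step checks out, including the justification of $\lceil n+t\rceil=n+\lceil t\rceil$ from Proposition~\ref{prop:ceilfracfix} with $n=1$. What your approach buys is the exact discrepancy and hence a characterization of equality ($\lfloor a+b\rfloor=1$, i.e.\ $a+b\geq 1$); what the paper's buys is brevity, needing no auxiliary identities. One small caveat on your closing remark: $a$ and $b$ are the \emph{complements} of the fractional parts (and are $0$ when $x$ or $y$ is an integer), so the statement that equality corresponds to the fractional parts not summing past $1$ fails in the edge case where one of $x,y$ is an integer --- there $\lfloor a+b\rfloor=0$ and the inequality is strict. This does not affect the validity of the proof of the stated inequality.
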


\begin{proof}
Observe that
$
\left \lceil x \right\rceil -1 + \left \lceil y \right \rceil -1 < x+y
$
and so
%\left \lceil x \right\rceil -1 + \left \lceil y \right \rceil -1  &< \left \lceil x+y\right\rceil\\
$\left \lceil x \right\rceil + \left \lceil y \right \rceil -2 < \left \lceil x+y\right\rceil $
which is a strict inequality of integers, so
$\lceil x \rceil + \lceil y \rceil -1 \leq \lceil x+y \rceil$.
\end{proof}

We can repeatedly apply the inequality in Proposition \ref{prop:ceilfuncbound} to obtain
\begin{cor} \label{cor:ceilfuncmultbound}
If $x$ is a real number and $a$ is a positive integer then
\[a \lceil x \rceil \leq \lceil ax \rceil +a-1.\] 
\end{cor}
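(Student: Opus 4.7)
The plan is to prove this by induction on the positive integer $a$, exactly as the phrase ``repeatedly apply the inequality'' in the statement suggests. For the base case $a=1$ the claim reduces to $\lceil x \rceil \leq \lceil x \rceil$, which is immediate.

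For the inductive step, I would assume $a \lceil x \rceil \leq \lceil ax \rceil + a - 1$ and try to bound $(a+1) \lceil x \rceil$. Writing $(a+1)\lceil x \rceil = a \lceil x \rceil + \lceil x \rceil$ and invoking the inductive hypothesis yields
\[ (a+1)\lceil x \rceil \leq \lceil ax \rceil + \lceil x \rceil + a - 1. \]
Now apply Proposition \ref{prop:ceilfuncbound} with the two real numbers $ax$ and $x$ to get $\lceil ax \rceil + \lceil x \rceil - 1 \leq \lceil ax + x \rceil = \lceil (a+1)x \rceil$, i.e.\ $\lceil ax \rceil + \lceil x \rceil \leq \lceil (a+1)x \rceil + 1$. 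Substituting this into the previous inequality gives
\[ (a+1)\lceil x \rceil \leq \lceil (a+1)x \rceil + 1 + a - 1 = \lceil (a+1)x \rceil + a, \]
which is precisely the statement for $a+1$, completing the induction.

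There is no real obstacle here; the only thing to be careful about is keeping track of the constant terms so that the $-1$ contributed by each use of Proposition \ref{prop:ceilfuncbound} accumulates to $a-1$ after $a-1$ applications. An equivalent way to present the argument without formal induction is to write $a \lceil x \rceil$ as a telescoping sum $\lceil x\rceil + \lceil x\rceil + \cdots + \lceil x\rceil$ and apply Proposition \ref{prop:ceilfuncbound} successively to the partial sums $\lceil jx\rceil + \lceil x\rceil$ for $j=1,\dots,a-1$, each step contributing a single $+1$ to the bound.
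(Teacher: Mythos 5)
Your induction is correct and is exactly the formalization of the paper's one-line argument, which simply says the corollary follows by repeatedly applying Proposition \ref{prop:ceilfuncbound}; the bookkeeping of the $a-1$ accumulated $-1$'s checks out. No issues.
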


    \section{General bounds}\label{sec:boundskrpds}
    
    A useful property of robust power domination is the subadditivity of the parameter with respect to $k$. This idea is established in the next three statements. 
    
    \begin{prop}\label{prop:incr}
    Let $k\geq 0$. For any graph $G$, $\gpk{G} +1 \leq \gpkplus{G}$.
    \end{prop}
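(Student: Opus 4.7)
The plan is to take a minimum $(k+1)$-rPDS of $G$ and show that deleting any single PMU from it yields a $k$-rPDS. This gives the required inequality directly from the definitions; the only care needed is to keep the multiset bookkeeping straight.

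Concretely, I would begin by letting $S$ be a $\gpkplusset$ of $G$, so $|S| = \gpkplus{G}$. Since $G$ is connected and nonempty, every power dominating set is nonempty, and in particular $S$ is nonempty (one can also invoke Observation \ref{obs:comparing}(3) to see $\gpkplus{G} \geq k+2$). Pick any element $v \in S$ (with multiplicity) and set $S' = S \setminus \{v\}$ as a submultiset, so $|S'| = \gpkplus{G} - 1$.

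Next I would verify that $S'$ is a $k$-rPDS. Fix an arbitrary submultiset $F' \subseteq S'$ with $|F'| = k$, and let $F = F' \cup \{v\}$, viewed as a submultiset of $S$ of size $k+1$. By the defining property of $S$ as a $(k+1)$-rPDS, the multiset $S \setminus F$ is a power dominating set of $G$. But $S \setminus F = S' \setminus F'$, so $S' \setminus F'$ is a power dominating set of $G$. Since $F'$ was arbitrary, $S'$ is a $k$-rPDS of $G$.

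Finally I would conclude $\gpk{G} \leq |S'| = \gpkplus{G} - 1$, which rearranges to the desired bound. There is no real obstacle here; the only subtle point is treating $S$ as a multiset so that removing one copy of $v$ leaves the remaining copies (if any) intact, and then being careful that $F' \cup \{v\}$ is still a valid submultiset of $S$.
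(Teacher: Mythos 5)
Your argument is correct and is essentially identical to the paper's proof: both take a minimum $(k+1)$-rPDS $S$, remove one copy of some $v$ to form $S'$, and observe that for any $F'\subseteq S'$ with $|F'|=k$ the multiset $F'\cup\{v\}$ is a valid submultiset of $S$ of size $k+1$, so $S'\setminus F' = S\setminus(F'\cup\{v\})$ is a power dominating set. Your extra remark on nonemptiness of $S$ is a harmless addition; no changes are needed.
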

    
    \begin{proof}
    Consider a $\gpkotherset{k+1}$, $S$, of $G$. Let $v\in S$. Create $S'=S\setminus\{v\}$, that is, $S'$ is $S$ with one fewer PMU at $v$. Observe that for any $F'\subseteq S'$ with $|F'|=k$, we have $F'\cup \{v\} \subseteq S$ and $|F'\cup \{v\}|=k+1$. Hence $S\setminus \lp F'\cup \{v\} \rp$ is a power dominating set of $G$. Thus, for any such $F'$, we have $\lp S\setminus \{v\} \rp\setminus F' = S'\setminus F'$ is a power dominating set of $G$. Therefore, $S'$ is a $k$-robust power dominating set of $G$ of size $|S|-1$. 
    \end{proof}
    
    Proposition \ref{prop:incr} can be applied repeatedly to obtain the next result.
    \begin{cor}\label{cor:incrj}
    Let $k\geq 0$ and $j\geq 1$. For any graph $G$, \[\gpk{G} + j \leq \gpkother{k+j}{G}.\]
    \end{cor}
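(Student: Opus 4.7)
The plan is a straightforward induction on $j$, with Proposition \ref{prop:incr} serving as both the base case and the engine of the inductive step. The base case $j=1$ is literally the statement of Proposition \ref{prop:incr}: $\gpk{G} + 1 \leq \gpkplus{G}$.

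For the inductive step, suppose the inequality $\gpk{G} + j \leq \gpkother{k+j}{G}$ holds for some $j \geq 1$. I would then invoke Proposition \ref{prop:incr} again, but with the parameter $k$ replaced by $k+j$; since Proposition \ref{prop:incr} is valid for every nonnegative integer, this substitution is legitimate and yields $\gpkother{k+j}{G} + 1 \leq \gpkother{k+j+1}{G}$. Chaining the two inequalities gives
\[
\gpk{G} + (j+1) \;\leq\; \gpkother{k+j}{G} + 1 \;\leq\; \gpkother{k+j+1}{G},
\]
which completes the induction.

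There is no real obstacle here; the mild point worth noting is simply that Proposition \ref{prop:incr} holds for arbitrary $k \geq 0$, so the shifted parameter $k+j$ is within its scope. No new combinatorial ideas about the power domination process are needed beyond the single-step reduction already carried out in the preceding proposition.
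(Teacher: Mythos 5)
Your proof is correct and matches the paper's approach exactly: the authors simply note that Proposition \ref{prop:incr} ``can be applied repeatedly,'' which is precisely the induction on $j$ you have written out. Nothing is missing.
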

    Corollary \ref{cor:incrj} implies the lower bound in the next proposition. The upper bound follows from taking $k+1$ copies of any minimum power dominating set for $G$ to form a $k$-rPDS.
    \begin{prop}\label{prop:basicbounds} Let $k\geq 0$. For any graph $G$,
    \[\gp{G}+k \leq \gpk{G} \leq (k+1)\gp{G}.\]
    \end{prop}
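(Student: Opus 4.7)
My plan is to handle the two inequalities separately, with the lower bound following almost immediately from the machinery already built, and the upper bound coming from an explicit construction.

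For the lower bound $\gp{G} + k \leq \gpk{G}$, I would simply invoke Corollary \ref{cor:incrj} with its parameter "$k$" set to $0$ and its parameter "$j$" set to the current $k$ (assuming $k \geq 1$). This gives $\gpkother{0}{G} + k \leq \gpkother{k}{G}$, and since $\gpkother{0}{G} = \gp{G}$ by part (1) of Observation \ref{obs:comparing}, this is exactly the desired inequality. The case $k = 0$ is trivial, as then the inequality reduces to $\gp{G} \leq \gp{G}$.

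For the upper bound $\gpk{G} \leq (k+1)\gp{G}$, I would take any minimum power dominating set $D$ of $G$ with $|D| = \gp{G}$, and define $S$ to be the multiset that contains exactly $k+1$ copies of each vertex of $D$. Then $|S| = (k+1)\gp{G}$, so it suffices to verify that $S$ is a $k$-rPDS. Given any submultiset $F \subseteq S$ with $|F| = k$ and any vertex $v \in D$, at most $k$ of the $k+1$ copies of $v$ can lie in $F$, so at least one copy of $v$ remains in $S \setminus F$. Thus the underlying set of $S \setminus F$ contains $D$, and since any superset of a power dominating set is a power dominating set (as extra initial vertices can only help the domination and forcing steps), $S \setminus F$ is a power dominating set of $G$.

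The argument is essentially routine; the only subtle point is being careful with the multiset/set distinction when invoking $D$ as a power dominating set inside $S \setminus F$, but this is handled cleanly by the pigeonhole observation that no vertex of $D$ can be entirely removed by only $k$ deletions.
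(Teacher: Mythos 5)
Your proof is correct and matches the paper's argument exactly: the paper also obtains the lower bound by applying Corollary \ref{cor:incrj} (with the base case $k=0$ and $j$ equal to the current $k$) and the upper bound by taking $k+1$ copies of a minimum power dominating set. Your write-up simply spells out the pigeonhole verification that the paper leaves implicit.
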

    
Observe that if $\gp{G} =1$ for any graph $G$, both Observation \ref{obs:comparing} and Proposition \ref{prop:basicbounds} demonstrate that $\gpk{G} = k+1.$ 
    
Haynes et al. observed in \cite[Observation~4]{hhhh02} that in a graph with maximum degree at least three, a minimum power dominating set can be chosen in which each vertex has degree at least $3$. We observe that this is the same for robust power domination. 

    %If a referee questions it, we have the explanation ready to go here:
    % The explanation is analogous to the argument for  \cite[Observation~4]{hhhh02}. That is, consider a graph $G$ with $\Delta\lp G\rp\geq 3$ and a $\gpkset$ $S$ containing a vertex $v$ with $\deg{v}{} \leq 2$. Let $u$ be the vertex of $G$ of degree 3 at minimum distance from $v$. Move all PMUs from $v$ to $u$ to create $S'$. Then any vertex that would have been observed by a result of a force performed by $v$ will instead be observed via $u$. If $\pmus{v}<k+1$ and this movement results in less than $k+1$ PMUs at $u$, then if $u$ is removed from $S'$ the power domination process will continue as if $v$ had been removed from $S$ and so $S'$ is a $k$-rPDS. %If $\pmus{v}=k+1$ then any vertex that would have been observed by $v$ is instead observed by $u$. If $\pmus{v} < k+1$, then a %If $v\in S\setminus F$, we consider instead $u\in S'\setminus F$ and note that any vertex that was observed by $v$ will instead be observed by $u$. If $v\not\in S\setminus F$ and so $u\not\in S'\setminus F$, then the power domination process would happen as before. 
    
    % \begin{obs}\cite[Observation~4]{hhhh02} If $G$ is a graph with $\Delta\geq 3$, then $G$ contains a minimum power dominating set in which every vertex has degree at least $3$.
    % \end{obs}

    \begin{obs}\label{obs:deg3}
    Let $k\geq 0$. If $G$ is a connected graph with $\Delta(G)\geq 3$, then $G$ contains a $\gpkset$ in which every vertex has degree at least 3.
    \end{obs}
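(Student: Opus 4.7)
The plan is to argue by contradiction. Among all $\gpkset$s of $G$, I choose one $S$ that minimizes
\[
\Phi(S) := \sum_{v \in V \, : \, \deg{v}{G} \leq 2} \pmuss{S}{v},
\]
and suppose $\Phi(S) > 0$. Pick $v \in S$ with $\deg{v}{G} \leq 2$. Since $G$ is connected and $\Delta(G) \geq 3$, I can select a vertex $w$ of degree at least $3$ at minimum distance from $v$, together with a shortest $v$--$w$ path $v = v_0, v_1, \ldots, v_\ell = w$; distance-minimality forces $\deg{v_i}{G} = 2$ for every $1 \leq i \leq \ell - 1$.

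Let $S' := (S \setminus \{v\}) \cup \{w\}$ be obtained by moving one PMU from $v$ to $w$. Then $|S'| = |S|$ and $\Phi(S') = \Phi(S) - 1$, so it suffices to show $S'$ is a $k$-rPDS. If the construction produces $\pmuss{S'}{w} > k+1$, the trimming idea from Observation~\ref{obs:numpmusleqk+1} yields a $k$-rPDS of size strictly less than $|S|$, contradicting the minimality of $|S|$; otherwise $S'$ is itself a $\gpkset$ with strictly smaller $\Phi$, contradicting the choice of $S$. Either way we reach a contradiction.

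The heart of the proof is the following \emph{move lemma}: if $T$ is any power dominating multiset of $G$ with $v \in T$, then $T^\star := (T \setminus \{v\}) \cup \{w\}$ is also a power dominating multiset. To prove it, I run the power domination process on $T^\star$. The PMU at $w$ dominates $N[w]$, and then the zero-forcing rule applies successively in reverse along the path: each $v_i$ with $1 \leq i \leq \ell-1$ has degree $2$ with forward neighbor $v_{i+1}$ already observed, so it forces $v_{i-1}$ unless that vertex is already observed. If $\deg{v}{G} = 2$, one further force from $v_0$ observes $v$'s remaining neighbor. After this ``priming'' phase, the observed set contains $N[v]$ together with the closed neighborhoods of every PMU that $T^\star$ and $T$ share, which is a superset of the set observed after the domination step of the $T$-process. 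Monotonicity of zero forcing then implies that $T^\star$ eventually observes every vertex that $T$ does, namely all of $V$.

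To apply the lemma, fix any failure $F' \subseteq S'$ with $|F'| = k$. If $F'$ contains at least one copy of $w$, I replace that copy with a copy of $v$ to obtain $F \subseteq S$ with $|F| = k$; a direct multiset calculation gives $S' \setminus F' = S \setminus F$, which is a PDS because $S$ is a $\gpkset$. Otherwise $F := F'$ is itself a valid failure in $S$, so $T := S \setminus F$ is a PDS containing $v$, and $S' \setminus F' = (T \setminus \{v\}) \cup \{w\}$ is a PDS by the move lemma. Hence $S'$ is a $k$-rPDS and the contradictions above ensue. The main obstacle is the move lemma, specifically verifying that the priming phase can be carried out regardless of how the other PMUs of $T$ happen to already observe portions of the path; once priming has reproduced $N[v]$, monotonicity of zero forcing finishes the argument.
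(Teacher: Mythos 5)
Your argument is correct. The paper offers no proof of this observation (it simply points to the analogous Observation~4 of Haynes et al.\ for ordinary power domination), and your relocation argument is exactly the standard one, correctly extended to the robust setting: the shortest-path/degree-2 priming sweep, the monotonicity of zero forcing, and the multiset bookkeeping matching each failure $F'\subseteq S'$ with a failure $F\subseteq S$ are all sound, so the potential $\Phi$ strictly decreases and the contradiction goes through. (Minor remark: the detour through Observation~\ref{obs:numpmusleqk+1} is unnecessary, since $S'$ has the same size as $S$ and is therefore a $\gpkset$ in either case, but it does no harm.)
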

    
    A \emph{terminal path} from a vertex $v$ in $G$ is a path from $v$ to a vertex $u$ such that $\deg{u}{}=1$ and every internal vertex on the path has degree 2. A \emph{terminal cycle} from a vertex $v$ in $G$ is a cycle $v,u_1,u_2,\ldots,u_\ell,v$ in which $\deg{u_i}{G}=2$ for $i=1,\ldots, \ell$. 
    
    \begin{prop}\label{prop:twotermpathsortermcycle}
    Let $k\geq 0$ and let $G$ be a connected graph with $\Delta(G)\geq 3$. Let $S$ be a $\gpkset$ in which every vertex has degree at least 3. Any vertex $v\in S$ that has at least two terminal paths from $v$ must have $\pmus{v}=k+1$. Any vertex $v\in S$ that has at least one terminal cycle must have $\pmus{v}=k+1$.
    \end{prop}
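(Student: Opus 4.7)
The plan is to prove both statements by contraposition: assume $\pmus{v} \leq k$ and show $S$ fails to be a $k$-rPDS. Since $\pmus{v} \leq k$, I can form a submultiset $F \subseteq S$ with $|F|=k$ that contains every copy of $v$, padded arbitrarily by other elements of $S$ (possible since Proposition~\ref{prop:basicbounds} gives $|S| \geq \gp{G} + k \geq k+1$). Writing $S' = S\setminus F$, the multiset $S'$ contains no copy of $v$, and it suffices to exhibit a vertex that is never observed starting from $S'$.

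The structural fact I would use is that every internal vertex of a terminal path (and every vertex $u_i$ of a terminal cycle) has degree at most $2$, so none of these vertices lie in $S$ by the standing assumption on $S$. In particular, for two terminal paths from $v$ with first internal vertices $w_1,x_1$ (respectively for a terminal cycle $v,u_1,\ldots,u_\ell,v$), the only vertex of $S$ adjacent to $w_1, x_1$ (respectively to $u_1,u_\ell$) is $v$ itself. Since $v\notin S'$, none of $w_1,x_1$ (respectively $u_1,u_\ell$) is observed in the domination step.

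Next I would establish, by a short minimality argument along the path, that in the zero-forcing process the only way any vertex on a terminal path $v,w_1,\ldots,w_s$ becomes observed is if $v$ first forces $w_1$ and the observation then cascades $w_1\to w_2\to\cdots\to w_s$. Indeed, if $w_i$ were the first observed vertex on the path, it could not be initially in $B$ (shown above), nor forced by $w_{i-1}$ or $w_{i+1}$ (they are unobserved by minimality), so the only possibility is $i=1$ with $v$ being the forcing vertex. The analogous statement holds along each of the two arcs of a terminal cycle from $v$. With this in hand the contradiction is immediate: forcing $w_1$ requires every other neighbor of $v$ — including $x_1$ — to be already observed, while $x_1$ itself can only be reached through $v$ forcing it, which demands $w_1$ observed first. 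This circular dependency prevents either $w_1$ or $x_1$ from ever entering $B$, so $S'$ is not a power dominating set, contradicting the fact that $S$ is a $\gpkset$. The identical dependency between $u_1$ and $u_\ell$ handles the terminal cycle case. Combined with Observation~\ref{obs:numpmusleqk+1}, this forces $\pmus{v}=k+1$.

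The main obstacle I anticipate is the propagation-only-outward lemma along a terminal path: while intuitive, it must be phrased carefully (by induction on the step of the forcing process or on the first observed index) to rule out the appearance of forcing entering a terminal path from its leaf end through some indirect interaction elsewhere in the graph. Everything else is bookkeeping around the construction of $F$ and the symmetric use of the lemma on the two exits from $v$.
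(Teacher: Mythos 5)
Your argument is correct and is essentially the paper's own proof, just unpacked: the paper likewise notes that all vertices of the terminal paths or cycle have degree at most $2$ and hence lie outside $S$, so $v$ has two neighbors observable only via $v$, which $v$ can only observe simultaneously in the domination step, forcing $\pmus{v}=k+1$. Your explicit contrapositive construction of $F$ and the circular-dependency lemma along the paths are just a more detailed rendering of the same idea, so no further changes are needed.
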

    \begin{proof}
        Let $v$ be a vertex in $S$ and suppose that $v$ has two terminal paths or a terminal cycle. All of the vertices in the terminal paths or terminal cycle have degree 1 or 2 and so are not in $S$. Thus, there are at least two neighbors of $v$ which can only be observed via $v$. As $v$ can only observe both of these neighbors via the domination step, it must be the case that $\pmus{v} = k+1$.
    \end{proof}
    %%%% The rest of this will be ommited from thesis
    Zhao, Kang, and Chang \cite{zkc06} defined the family of graphs $\mathcal{T}$ to be those graphs obtained by taking a connected graph $H$ and for each vertex $v\in V(H)$ adding two vertices, $v'$ and $v''$; and two edges $vv'$ and $vv''$, with the edge $v'v''$ optional. The complete bipartite graph $K_{3,3}$ is the graph with vertex set $X\cup Y$ with $|X|=|Y|=3$ and edge set $E=\{xy:x\in X, y\in y\}$.
    
    \begin{thm}{\rm \cite[Theorem~3.]{zkc06}}\label{thm:powdomnover3}
    If $G$ is a connected graph on $n\geq 3$ vertices then $\gp{G}\leq\frac{n}{3}$  with equality if and only if $G\in \mathcal{T}\cup \{K_{3,3}\}$.
    \end{thm}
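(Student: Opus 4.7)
The plan is to prove the inequality by strong induction on $n$, then analyze when equality can hold. The base cases $n=3,4,5$ are checked by listing the connected graphs of that size, and when $\Delta(G)\leq 2$ the graph $G$ is a path or cycle on $n\geq 3$ vertices, so $\gp{G}=1\leq n/3$.

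For the inductive step with $\Delta(G)\geq 3$, I would pick a vertex $v$ of maximum degree, place a single PMU at $v$, and identify a set $B$ of observed vertices with $|B|\geq 3$ whose removal leaves either nothing or components each of order at least three, to which the inductive hypothesis applies. The closed neighborhood $N[v]$ already has at least four vertices, and following a few zero-forcing steps along any low-degree branches typically collects enough observations to reduce by a factor of three; the case analysis also has to absorb stray components of order $1$ or $2$ into the PMU budget already spent at $v$.

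For the equality characterization, one direction is to verify $\gp{G}=n/3$ for each $G\in\mathcal{T}\cup\{K_{3,3}\}$. For $G\in\mathcal{T}$ obtained from $H$ by attaching $v'$ and $v''$ (with optional edge $v'v''$) to each $v\in V(H)$, partition $V(G)$ into triples $T_v=\{v,v',v''\}$ and observe that $v'$ and $v''$ can only be reached by propagation through $v$, so each triple $T_v$ must contribute at least one PMU or receive observation from an early forcing step originating at $v$, giving $\gp{G}\geq|V(H)|=n/3$. For $K_{3,3}$, a short direct argument shows $\gp{K_{3,3}}=2$.

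The main obstacle is the converse: showing that any graph attaining $\gp{G}=n/3$ must belong to $\mathcal{T}\cup\{K_{3,3}\}$. The strategy is to trace through the inductive argument and require tightness at every reduction step, which forces every vertex of degree at least three to have precisely the local structure of a $\mathcal{T}$-vertex (two pendants, possibly joined by an edge). The exceptional appearance of $K_{3,3}$ arises because its symmetric structure permits a power dominating set of size exactly two despite lacking pendants, and ruling out further exceptional graphs without easy pendant witnesses is the delicate part of the argument.
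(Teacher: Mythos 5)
First, a point of reference: the paper does not prove this statement at all---it is quoted from Zhao, Kang, and Chang \cite{zkc06}---so there is no internal proof to compare against, and your attempt must stand on its own. As written it is an outline rather than a proof, and the two places where the real work lives are both left open. For the inequality, the inductive step \emph{is} the theorem, and you have not carried it out: ``following a few zero-forcing steps along any low-degree branches typically collects enough observations'' is not an argument. The content you are omitting is precisely the case analysis that guarantees an observed set $B$ with $|B|\geq 3$ whose removal leaves components to which the hypothesis applies, the verification that a power dominating set of a component $G_i$ of $G-B$ still forces correctly inside $G$ (it does, because a vertex of $G_i$ has all of its outside-$G_i$ neighbors in the already-observed set $B$, so no force is blocked---but this must be said), and the absorption of components of order $1$ or $2$. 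Without that case analysis there is no proof of the bound.

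For the equality characterization there are two problems. Your lower-bound argument for $G\in\mathcal{T}$ is stated in a way that undermines itself: you allow each triple $T_v=\{v,v',v''\}$ to either contain a PMU \emph{or} ``receive observation from an early forcing step originating at $v$.'' If the second alternative could occur, the conclusion $\gp{G}\geq |V(H)|$ would not follow. The correct claim is that the second alternative is impossible: if $S\cap T_v=\emptyset$, then $v'$ and $v''$ are both unobserved after the domination step, each can only be reached through $v$ or through the other, and $v$ always has at least two unobserved neighbors, so neither is ever forced; hence every triple must meet $S$. More seriously, the converse direction---that $\gp{G}=n/3$ forces $G\in\mathcal{T}\cup\{K_{3,3}\}$---is the genuinely hard part of the theorem, and you offer no argument at all beyond naming it ``the delicate part.'' Tracing tightness through an inductive step that has not itself been written down cannot be checked. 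As it stands, the proposal establishes neither the inequality nor the characterization.
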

    
    This gives an upper bound for $\gpk{G}$ in terms of the size of the vertex set and equality conditions, as demonstrated in the next corollary.
    
    \begin{cor}
    Let $G$ be a connected graph with $n\geq 3$ vertices. Then $\gpk{G}\leq (k+1)\frac{n}{3}$ for $k\geq 0$. When $k=0$, this is an equality if and only if $G\in \mathcal{T}\cup\{K_{3,3}\}$. When $k\geq 1$, this is an equality if and only if $G\in\mathcal{T}$.
    \end{cor}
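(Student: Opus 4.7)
The upper bound $\gpk{G}\leq (k+1)\frac{n}{3}$ follows by chaining Proposition \ref{prop:basicbounds} and Theorem \ref{thm:powdomnover3}. The $k=0$ equality characterization is exactly Theorem \ref{thm:powdomnover3}, so the real content is to (I) establish equality for every $G\in\mathcal{T}$ and all $k\geq 0$, (II) rule out every $G\notin\mathcal{T}\cup\{K_{3,3}\}$ for every $k$, and (III) rule out $G=K_{3,3}$ when $k\geq 1$.

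The heart of the proof is (I). Write $n=3m$ and group $V(G)$ into the $m$ triples $T_v=\{v,v',v''\}$ indexed by $v\in V(H)$. The key claim is that every power dominating set of $G$ contains at least one vertex of each triple $T_v$. Indeed, if $T_v$ contains no PMU, then $N[v']$ and $N[v'']$ both lie inside $T_v$ (whether or not the optional edge $v'v''$ is present), so neither $v'$ nor $v''$ is observed in the domination step. During forcing, $v$ always has both $v'$ and $v''$ as unobserved neighbors simultaneously, so no force from $v$ can occur; in the triangle case, forcing $v'$ from $v''$ would require $v''$ to be observed first, and by symmetry vice versa, producing a deadlock. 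Hence $v'$ and $v''$ are never observed, contradicting the PDS property. Applying the claim to $S\setminus F$ for every submultiset $F\subseteq S$ with $|F|=k$ forces $|S\cap T_v|\geq k+1$ (otherwise one could empty $T_v$ entirely within budget $k$), and summing over triples yields $\gpk{G}\geq (k+1)m=(k+1)\frac{n}{3}$, matching the upper bound.

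For (II), Theorem \ref{thm:powdomnover3} gives the strict inequality $\gp{G}<\frac{n}{3}$, whence $\gpk{G}\leq (k+1)\gp{G}<(k+1)\frac{n}{3}$ by Proposition \ref{prop:basicbounds}. For (III) I exhibit an explicit $k$-rPDS of $K_{3,3}$ of size $2k+1$. A direct check shows that any two distinct vertices of $K_{3,3}$ form a PDS: their closed neighborhoods already cover five of the six vertices, and the remaining vertex has a common neighbor with exactly one unobserved neighbor, so a single forcing step finishes the process. Pick any three distinct vertices $u,v,w$ of $K_{3,3}$ and let $S$ contain $k$ copies of $u$, $k$ copies of $v$, and one copy of $w$. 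Any deletion of $k$ PMUs cannot simultaneously empty both $u$ and $v$ from the support of $S$ (that would cost $2k>k$ for $k\geq 1$), so at least two distinct vertices survive in the support of $S\setminus F$ and the residual is a PDS. Thus $\gpk{K_{3,3}}\leq 2k+1<2(k+1)=(k+1)\frac{n}{3}$ for every $k\geq 1$.

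The main obstacle will be the triple-argument lower bound in (I); in particular the triangle subcase requires careful tracking of the zero forcing rule to rule out all potential observation orders. Parts (II) and (III) are then routine consequences of the earlier general bounds together with the small explicit construction.
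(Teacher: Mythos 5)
Your proof is correct, and it reaches the equality characterization by a genuinely different route than the paper's. For $G\in\mathcal{T}$ the paper first normalizes a minimum $k$-rPDS using Observation \ref{obs:deg3} so that every chosen vertex lies in $V(H)$, and then applies Proposition \ref{prop:twotermpathsortermcycle} (two terminal paths or a terminal cycle at $v$ force $\pmus{v}=k+1$) to each $v\in V(H)$. You instead partition $V(G)$ into the triples $T_v=\{v,v',v''\}$ and prove directly that every power dominating set must meet every triple (your deadlock argument for $v',v''$ is sound in both the path and triangle cases), so any $k$-rPDS must place at least $k+1$ PMUs on each triple; summing over the $n/3$ disjoint triples bounds an \emph{arbitrary} minimum $k$-rPDS with no normalization step. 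This is arguably cleaner logically: the paper's phrasing additionally needs the (true but unstated) fact that every $v\in V(H)$ actually appears in the normalized set before Proposition \ref{prop:twotermpathsortermcycle} can be invoked at $v$, whereas your triple argument sidesteps this entirely. For $K_{3,3}$ the paper cites the exact value $\gpk{K_{3,3}}=k+\lfloor k/5\rfloor+2<2(k+1)$ from the later Theorem \ref{thm:k33}; your explicit multiset of size $2k+1$ ($k$ copies each of two vertices plus one copy of a third) is a weaker but fully self-contained witness of strictness, using only that any two distinct vertices of $K_{3,3}$ power dominate and that no vertex of your multiset carries more than $k$ PMUs, so after any $k$ failures at least two distinct supported vertices remain. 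Both approaches are valid; yours is self-contained, while the paper's leans on its general machinery and a forward reference to the exact computation.
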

    
    \begin{proof}
    The upper bound is given by Proposition \ref{prop:basicbounds} and Theorem \ref{thm:powdomnover3}. From these results, we need only consider $\mathcal{T}\cup \{K_{3,3}\}$ for equality. The $k=0$ case follows directly from the power domination result. Let $k\geq 1$. 
    
    First consider $G\in\mathcal{T}$, constructed from $H$. Note that $\Delta \lp G \rp \geq 3$, so there exists a $\gpkset$, say $S$, in which every vertex has degree at least 3, so every vertex in $S$ is a vertex of $H$. For each $v\in V(H)$, $\deg{v}{G}\geq 3$ and there are either two terminal paths (if $v'v''\not\in E(H)$) or a terminal cycle (if  $v'v''\in E(H)$). By Proposition \ref{prop:twotermpathsortermcycle}, each $v\in V(H)$ must have at least $k+1$ PMUs.
    
    Finally, consider $K_{3,3}$. Note that $\gp{K_{3,3}}=2$. We will see in Theorem \ref{thm:k33} that $\gpk{K_{3,3}} = k +\left\lfloor \frac{k}{5}\right\rfloor +2 < 2(k+1)$ for $k\geq 1$. 
    \end{proof}
    
    \begin{defn}
    For any graph $G$, define $\bigpds{G}$ to be the size of the largest set $A\subseteq V$ such that for any $B\subseteq A$ with $|B|=\gp{G}$, $B$ is a power dominating set of $G$. 
\end{defn}
Observe that $\gp{G}\leq \bigpds{G}$. For example, the star graph $S_{16}$ shown in Figure \ref{fig:starmulti} has $\gp{S_{16}}= \bigpds{S_{16}}=1$. The complete bipartite graph $K_{3,3}$ has $\gp{K_{3,3}}= 2$ and $\bigpds{K_{3,3}}=6$ as any two vertices of $K_{3,3}$ form a power dominating set. 
%In Section \ref{sec:grid}, we will determine $\bigpds{G}$ for small square grid graphs. From the definition of $\bigpds{G}$ and Proposition \ref{prop:basicbounds}, we have the following proposition.
    \begin{prop}\label{prop:gpklowQ}
    For any graph $G$ and $k\geq 0$,  if  $\bigpds{G} \geq k+\gp{G}$ then $\gpk{G}=k+\gp{G}$.
    \end{prop}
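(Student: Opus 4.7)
The plan is to prove equality by establishing both inequalities, with the lower bound immediate from previously stated results and the upper bound coming from a direct construction using the hypothesis on $\bigpds{G}$.

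First I would invoke Proposition \ref{prop:basicbounds} for the lower bound: it already gives $\gp{G}+k\leq \gpk{G}$ for any graph $G$, so no further work is needed on that side.

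For the upper bound, I would use the hypothesis $\bigpds{G}\geq k+\gp{G}$ to select, by definition, a set $A\subseteq V$ with $|A|\geq k+\gp{G}$ such that every size-$\gp{G}$ subset of $A$ is a power dominating set of $G$. I would then pick any $A'\subseteq A$ with $|A'|=k+\gp{G}$ and take the multiset $S=A'$ (each element with multiplicity $1$) as a candidate $k$-rPDS. Since all elements of $S$ are distinct vertices, removing any submultiset $F\subseteq S$ with $|F|=k$ leaves $S\setminus F$ as a set of $\gp{G}$ distinct vertices contained in $A$; by the defining property of $A$, this set is a power dominating set of $G$. Hence $S$ is a $k$-rPDS of size $k+\gp{G}$, giving $\gpk{G}\leq k+\gp{G}$.

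There is essentially no hard step here: the only thing worth being careful about is the multiset-versus-set bookkeeping, namely noting that because the PMUs are placed at distinct vertices of $A$, removing any $k$ PMUs literally produces a size-$\gp{G}$ subset of $A$, which is exactly the object the definition of $\bigpds{G}$ controls. Combining the two inequalities then yields $\gpk{G}=k+\gp{G}$.
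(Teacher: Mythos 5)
Your proposal is correct and follows essentially the same argument as the paper: the lower bound is quoted from Proposition \ref{prop:basicbounds}, and the upper bound comes from selecting $k+\gp{G}$ distinct vertices of the set guaranteed by $\bigpds{G}\geq k+\gp{G}$, so that deleting any $k$ PMUs leaves a size-$\gp{G}$ subset that power dominates. The only difference is that you make the multiset bookkeeping explicit, which the paper leaves implicit.
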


    \begin{proof}
    % If for some $k$, $\gpk{G}=k+\gp{G}$ and $\gp{G}\geq 2$, then any $\gpkset$, say $S$, must be comprised of distinct vertices. Otherwise, there is a way to remove $k$ vertices and have fewer than $\gp{G}$ vertices retain PMUs.
    % %This is because when any $k$ PMUs are removed there must be at least $\gp{G}$ distinct vertices that still contain PMUs. 
    % Thus $S$ is a set in which every subset of size $\gp{G}$ is a power dominating set of $G$, so $\bigpds{G} \geq |S| = k+\gp{G}$.
    
    If $\bigpds{G} \geq k+\gp{G}$, then there exists a set $S$ of size at least $k+\gp{G}$ so that any $\gp{G}$ elements of $S$ form a power dominating set of $G$. Thus, any $\gp{G}+k$ elements of $S$ form a $k$-rPDS of $G$ of size $\gp{G}+k$ and so $\gpk{G}\leq \gp{G}+k$. By the lower bound in Proposition \ref{prop:basicbounds}, $\gpk{G}\geq \gp{G}+k$.
    \end{proof}

    When $\bigpds{G} > \gp{G} \geq 2$, the following upper bound sometimes improves the upper bound from Proposition \ref{prop:basicbounds}.

    \begin{thm}\label{thm:gpQbound}
    If $\bigpds{G} > \gp{G}\geq 2$, then $\gpk{G} \leq \left\lceil \frac{\bigpds{G}(k+\gp{G}-1)}{\bigpds{G}-\gp{G}+1} \right\rceil$ for $k\geq 1$.
    \end{thm}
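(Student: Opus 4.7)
The plan is to construct an explicit $k$-rPDS of size at most $M := \left\lceil \frac{\bigpds{G}(k+\gp{G}-1)}{\bigpds{G}-\gp{G}+1} \right\rceil$ placed on a set $A$ witnessing $\bigpds{G}$. Write $p = \gp{G}$ and $q = \bigpds{G}$. By the definition of $\bigpds{G}$ there is $A \subseteq V(G)$ with $|A| = q$ such that every $p$-subset of $A$ is a power dominating set. It therefore suffices to place PMUs on vertices of $A$ in such a way that after deleting any $k$ of them, at least $p$ distinct vertices of $A$ still carry a PMU; those $p$ survivors are then a PDS.

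The construction is to distribute $M$ PMUs on $A$ as evenly as possible. Write $M = aq + b$ with $a = \lfloor M/q \rfloor$ and $0 \leq b < q$. Choose $b$ vertices of $A$ to hold $a+1$ PMUs apiece and put $a$ PMUs on each of the remaining $q - b$ vertices of $A$. The resulting multiset $S$ has $|S| = (a+1)b + a(q-b) = M$. To show $S$ is $k$-robust, suppose for contradiction that some $F \subseteq S$ with $|F| = k$ kills at least $q - p + 1$ vertices of $A$. The cheapest way to kill $q - p + 1$ vertices is to target those with the smallest multiplicities first, requiring exactly $(q-p+1)a + \max\{0,\ b - p + 1\}$ PMUs; it is therefore enough to show this quantity is at least $k+1$.

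The key algebraic input comes from the inequality $M(q-p+1) \geq q(k+p-1)$ (which is exactly the defining property of the ceiling $M$). Substituting $M = aq + b$ and dividing by $q$ yields $a(q-p+1) \geq (k+p-1) - b(q-p+1)/q$, and since the left side is an integer this tightens to $a(q-p+1) \geq (k+p-1) - \lfloor b(q-p+1)/q \rfloor$. The main obstacle, and essentially the only one, is to verify the target inequality in each of the two cases on $b$. If $b \leq p-1$, then using $p \geq 2$ we get $b(q-p+1) \leq (p-1)(q-p+1) = (p-1)q - (p-1)^2 < (p-1)q$, so $\lfloor b(q-p+1)/q \rfloor \leq p - 2$, giving $a(q-p+1) \geq k+1$. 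If $b \geq p$, then $q-p+1 < q$ yields $b(q-p+1) < bq$, so $\lfloor b(q-p+1)/q \rfloor \leq b - 1$, giving $a(q-p+1) \geq k + p - b$ and hence $(q-p+1)a + (b - p + 1) \geq k + 1$. In either case the adversary's cost to kill $q - p + 1$ vertices exceeds $|F| = k$, so at least $p$ vertices of $A$ survive in $S \setminus F$; this $p$-subset of $A$ is a power dominating set, and therefore $\gpk{G} \leq M$.
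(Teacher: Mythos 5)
Your proposal is correct and follows essentially the same approach as the paper: distribute $M$ PMUs as evenly as possible over a maximum set $A$ witnessing $\bigpds{G}$, and show by a counting argument that no $k$ failures can leave fewer than $\gp{G}$ occupied vertices of $A$. The differences are cosmetic --- the paper caps every multiplicity at $\lceil M/\bigpds{G} \rceil$ and bounds the number of surviving PMUs from below, while you balance multiplicities between $\lfloor M/\bigpds{G} \rfloor$ and $\lceil M/\bigpds{G} \rceil$ and bound the adversary's deletion cost from below; both verifications reduce to the same ceiling inequality.
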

    
    \begin{proof}
    Let $A=\lb v_1,v_2,\ldots,v_{\bigpds{G}}\rb\subseteq V$ be a maximum set such that any subset of size $\gp{G}$ is a power dominating set of $G$. For what follows, let $p=\left\lceil \frac{\bigpds{G}(k+\gp{G}-1)}{\bigpds{G}-\gp{G}+1} \right\rceil$. Construct
    $S=\lb v_1^{m_1}, v_2^{m_2},\ldots, v_{\bigpds{G}}^{m_{\bigpds{G}}} \rb$ where 
    \begin{align*}
    m_1= \left\lceil \frac{p}{\bigpds{G}} \right\rceil \text{ and }
    m_i= \min\lb \left\lceil \frac{p}{\bigpds{G}} \right\rceil, p-\sum_{j=1}^{i-1} m_j \rb \text{ for } i\geq 2.
    \end{align*}

    In order to show that $S$ is a $k$-rPDS of $G$, we will show that
    $p-k \geq (\gp{G}-1)\left\lceil \frac{p}{\bigpds{G}} \right\rceil + 1.$
    Assume this is true. Then whenever we have $p$ PMUs and $k$ fail, there are at least $(\gp{G}-1)\left\lceil \frac{p}{\bigpds{G}} \right\rceil + 1$ working PMUs. As each vertex has at most $\left\lceil \frac{p}{\bigpds{G}} \right\rceil$ PMUs, there are at least $\gp{G}$ vertices of $A$ that must have at least one PMU remaining and so form a power dominating set. 
    
    We prove the equivalent statement 
    \[p-k- (\gp{G}-1)\left\lceil \frac{p}{\bigpds{G}} \right\rceil \geq 1.\]
    
    \noindent Observe that by Proposition \ref{prop:ceilfracfix},
    \begin{align*}
    p-k-(\gp{G}-1)\left\lceil \frac{p}{\bigpds{G}} \right\rceil &= p-k-(\gp{G}-1)\left\lceil \frac{k+\gp{G}-1}{\bigpds{G}-\gp{G}+1} \right\rceil.
    \end{align*}
    Then by Corollary \ref{cor:ceilfuncmultbound} and simplifying, we see that
    \begin{align*}
    p-k-(\gp{G}&-1)\left\lceil \frac{p}{\bigpds{G}} \right\rceil \geq p-k-\lp \left\lceil \frac{(k+\gp{G}-1)(\gp{G}-1)}{\bigpds{G}-\gp{G}+1} \right\rceil +\gp{G}-2 \rp \label{eqn:fracfix}\\
                    %&\nonumber\\
                    &= p-\lp \left\lceil \frac{(k+\gp{G}-1)(\gp{G}-1)}{\bigpds{G}-\gp{G}+1} \right\rceil +\gp{G}-2 +k \rp\nonumber\\
                    %&\nonumber\\
                    &= p-\left\lceil \frac{(k+\gp{G}-1)(\gp{G}-1) + (\bigpds{G}-\gp{G}+1)(  \gp{G}-2+k)}{\bigpds{G}-\gp{G}+1} \right\rceil\nonumber \\
                    %&\nonumber\\
                    &=p-\left\lceil\frac{\bigpds{G}(k+\gp{G}-1) -\bigpds{G}+\gp{G}-1}{\bigpds{G}-\gp{G}+1} \right\rceil\nonumber\\
                    %&\nonumber\\
                    &=p-\left\lceil\frac{\bigpds{G}(k+\gp{G}-1)}{\bigpds{G}-\gp{G}+1} \right\rceil +1\nonumber\\
                    %&\nonumber\\
                    %&=p-p +1\nonumber\\
                    %&\nonumber\\
                    &=1.\nonumber \qedhere
    \end{align*}
    \end{proof}
    
    To see the difference between the upper bounds given in Theorem \ref{thm:gpQbound} and Proposition \ref{prop:basicbounds}, let $\bigpds{G}= \gp{G}+r$. Then Theorem \ref{thm:gpQbound} becomes
    \begin{align*} 
    \gpk{G} &\leq \left\lceil \frac{(\gp{G}+r)(k+\gp{G}-1)}{r+1} \right\rceil\\    
            %&\\
            &= \left\lceil \frac{\gp{G}(k+1)+\gp{G}(\gp{G}-2)+r(k+\gp{G}-1)}{r+1} \right\rceil \\
            %&\\
            &= \gp{G}(k+1)+\left\lceil \frac{-r\gp{G}(k+1)+ \gp{G}(\gp{G}-2)+ r(k+\gp{G}-1)}{r+1} \right\rceil \\
            %&\\
            %&= \gp{G}(k+1)+\left\lceil \frac{-r\gp{G}(k+1)+ \gp{G}(\gp{G}-2)+ r(k+\gp{G}-1)}{r+1} \right\rceil \\
            %&\\
            &= \gp{G}(k+1)+\left\lceil \frac{-kr(\gp{G}-1)+\gp{G}(\gp{G}-2)-r}{r+1} \right\rceil. %\\
            %&\\
            %&=\gp{G}(k+1) - \left\lfloor \frac{kr(\gp{G}-1)-\gp{G}(\gp{G}-2)+r}{r+1} \right\rfloor.
    \end{align*}
    This means that the bound from Theorem \ref{thm:gpQbound} improves the upper bound in Proposition \ref{prop:basicbounds} when the second term above is negative. 
    Since $r\geq 1$ and $\gp{G}\geq 2$, the second term is negative as $k$ approaches infinity. Thus for $\bigpds{G} > \gp{G}\geq 2$, we see that there exists some $k'$ such that for every $k\geq k'\geq 1$, the bound from Theorem \ref{thm:gpQbound} is an improvement over the upper bound from Proposition \ref{prop:basicbounds}. It will be useful to look specifically at Theorem \ref{thm:gpQbound} when $\gp{G}=2$.
    
    \begin{cor}\label{cor:2Qbound}
    If $\bigpds{G}>\gp{G} = 2$, then $\gpk{G} \leq \left\lceil \frac{\bigpds{G}(k+1)}{\bigpds{G}-1} \right\rceil$ for $k\geq 1$.
    \end{cor}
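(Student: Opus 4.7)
The plan is essentially a one-line specialization of Theorem \ref{thm:gpQbound}. Since the hypothesis $\bigpds{G}>\gp{G}=2$ of the corollary implies the hypothesis $\bigpds{G}>\gp{G}\geq 2$ of the theorem, I can directly invoke the theorem. Substituting $\gp{G}=2$ into the numerator yields $\bigpds{G}(k+\gp{G}-1)=\bigpds{G}(k+1)$, and into the denominator yields $\bigpds{G}-\gp{G}+1=\bigpds{G}-1$, so the bound from Theorem \ref{thm:gpQbound} becomes exactly $\left\lceil \frac{\bigpds{G}(k+1)}{\bigpds{G}-1} \right\rceil$.

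There is no real obstacle here: the result is a plug-in, and the construction of the $k$-rPDS (spreading $p$ PMUs as evenly as possible across the $\bigpds{G}$ vertices of a maximum set $A$) is inherited from the proof of Theorem \ref{thm:gpQbound}. I would simply write: \emph{Apply Theorem \ref{thm:gpQbound} with $\gp{G}=2$.}
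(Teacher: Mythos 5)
Your proposal is correct and matches the paper exactly: the corollary is obtained by substituting $\gp{G}=2$ into the bound of Theorem \ref{thm:gpQbound}, which turns the numerator into $\bigpds{G}(k+1)$ and the denominator into $\bigpds{G}-1$. The paper treats this as an immediate specialization and offers no separate argument, so there is nothing missing from your one-line proof.
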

    
    To illustrate the use of the bounds in this section, we use these results to determine the $k$-robust power domination number for $K_{3,3}$ and $K_{3,b}$ for sufficiently large $b$ in Section \ref{sec:completebipartite}.

    %%%%%%%%%%%%%%%%%%%%
 %%%%%%%%%%%%%%%%%%%%%%%%%%%%%%%%%%%%%%%%%%%%%%%%%%%%%%%%%%%%%%%%%
 \section{Complete bipartite graphs}\label{sec:completebipartite}

 The \emph{complete bipartite graph}, $K_{n,m}$ is the graph with vertex set $V=X\cup Y$ such that $|X|=n$, $|Y|=m$, and edge set $E=\{xy : x\in X, y\in Y\}$. We examine the case when $n=m=3$, then the case when $n=3$ and $m\geq 4$. Next, we find bounds for the $n,m\geq 4$ case, which combine to provide a result for the general $n=m$ case. We will need the following observation.
\begin{obs} \label{obs:bip_pow_set} Suppose the parts of $K_{n,m}$ are $X$ and $Y$, such that $|X| = n$ and $|Y|= m$. If $S$ is a power dominating set of $K_{n,m}$, then $S$ contains: at least 1 vertex from $X$ and 1 vertex from $Y$; or at least $n-1$ vertices from $X$; or at least $m-1$ vertices from $Y$.
\end{obs}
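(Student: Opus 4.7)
The plan is to prove the observation by a case analysis on how $S$ intersects the two parts $X$ and $Y$. The three conclusions naturally correspond to: (i) $S$ meets both parts, which immediately gives the first condition; (ii) $S \subseteq X$, from which the second condition must be deduced; and (iii) $S \subseteq Y$, which is symmetric to (ii). So I will handle (i) trivially, focus the real work on (ii), and invoke symmetry for (iii).

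For case (ii), suppose $S \subseteq X$. First I would carry out the domination step: since for every $x \in X$ we have $N[x] = \{x\} \cup Y$, the initially observed set is $B = S \cup Y$. The crux is then to analyze what the zero forcing step can do from this $B$. Every vertex of $B$ falls into one of two types: a vertex $x \in S$ has $N(x) = Y \subseteq B$, so it has no unobserved neighbors and cannot force; a vertex $y \in Y$ has $N(y) = X$, so it can force if and only if exactly one of the $n$ vertices of $X$ is still unobserved, i.e., $|X \setminus S| = 1$. Consequently, if $|S| \leq n - 2$ the process stalls with $X \setminus S \neq \emptyset$ unobserved, contradicting that $S$ is a power dominating set. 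Hence $|S| \geq n - 1$, which is the second listed condition. Case (iii) follows by swapping the roles of $X$ and $Y$ verbatim.

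I do not expect a genuine obstacle here; the result leans entirely on the strong homogeneity of $K_{n,m}$, namely that all vertices in a given part share the same neighborhood. The only thing to be careful about is framing the argument so that the zero-forcing stall in cases (ii) and (iii) is clearly the reason a one-sided $S$ must be nearly all of its part, rather than merely asserting it.
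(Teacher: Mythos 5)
Your proof is correct: the case split on whether $S$ meets both parts or lies entirely in one part, followed by the stall analysis of the forcing step (every $y\in Y$ has all of $X\setminus S$ as unobserved neighbors, so no force can occur while $|X\setminus S|\geq 2$), is exactly the routine verification the paper has in mind, which is why it states this as an Observation without supplying a proof. Nothing is missing beyond the vacuous edge case $S=\emptyset$, which is not a power dominating set of a nonempty graph anyway.
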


We will also need the Reverse Pigeonhole Principle. This follows from the the Pigeonhole Principle, which states that if $k$ objects are distributed among $n$ sets, then one set must have at least $\lc \frac{k}{n}\rc$ objects. 

\begin{rem}[Reverse Pigeonhole Principle]\label{rem:rev_pigeonhole}
    If $k$ objects are distributed among $n$ sets, then one set must have at most $\lf\frac{k}{n}\rf$ objects.
\end{rem}

To see why Remark \ref{rem:rev_pigeonhole} holds, observe that if $n$ sets each had at least $\lf\frac{k}{n}\rf+1$ objects, then, for $k = qn+r$ such that $q\geq 0$ and $0\leq r\leq n-1$, we would have
\[k \geq \sum_{i=1}^n \lp\lf\frac{k}{n}\rf +1 \rp = \sum_{i=1}^n \lp q+1 \rp =qn+n >k,\]
which is a contradiction. \\

We begin with $\gpk{K_{3,3}}$.
 \begin{thm}\label{thm:k33}
 Let $k\geq 0$. Let $K_{3,3}$ be the complete bipartite graph with parts $X=\{x_1,x_2,x_3\}$ and $Y=\{x_4,x_5,x_6\}$. Then
 \[\gpk{K_{3,3}}= k + \left\lfloor \frac{k}{5} \right\rfloor + 2.\]
 % This is realizable by the sets
 % \[ S = \left\{ 
 %         \begin{array}{ll}
 %               \left\{x_1,x_2 \right\} & k=0\\
 %               \left\{x_1,x_2,x_3 \right\} & k=1\\
 %               \left\{x_1,x_2,x_3,x_4 \right\} & k=2\\
 %               \left\{x_1,x_2,x_3,x_4,x_5\right\} & k= 3\\
 %               \left\{x_1,x_2,x_3,x_4,x_5,x_6\right\} & k= 4\\
 %               \left\{x_1^{(2)},x_2^{(2)},x_3^{(2)},x_4^{(2)}\right\} & k= 5\\
 %               \left\{x_1^{(2)},x_2^{(2)},x_3^{(2)},x_4^{(2)},x_5\right\} & k= 6\\
 %               \left\{x_1^{(3)},x_2^{(3)},x_3^{(3)},x_4^{(3)},x_5^{(2)}\right\} & k= 10\\
 %               \left\{x_i^{(\frac{k-1}{5} +1)} \text{ for } 1\leq i \leq 5, x_6^{(\frac{k-1}{5}-2)}\right\} & k\geq 11, k\equiv 1 \mod 5 \\
 %               \left\{x_i^{(\frac{k-2}{5} +1)} \text{ for } 1\leq i \leq 5, x_6^{(\frac{k-2}{5}-1)}\right\} & k\geq 7, k\equiv 2 \mod 5 \\
 %               \left\{x_i^{(\frac{k-3}{5} +1)} \text{ for } 1\leq i \leq 5, x_6^{(\frac{k-3}{5})}\right\} & k\geq 8, k\equiv 3 \mod 5 \\
 %               \left\{x_i^{(\frac{k-4}{5} +1)} \text{ for } 1\leq i \leq 6 \right\} & k\geq 9, k\equiv 4 \mod 5 \\
 %               \left\{x_i^{(\frac{k}{5} +1)} \text{ for } 1\leq i \leq 5, x_6^{(\frac{k}{5}-3)} \right\} & k\geq 15, k \equiv 0 \mod 5\\
 %               %,x_2^{(\left\lfloor\frac{k}{5}\right\rfloor +1)},x_3^{(\left\lfloor\frac{k}{5}\right\rfloor +1)},y_1^{(\left\lfloor\frac{k}{5}\right\rfloor +1)},y_2^{(\left\lfloor\frac{k}{5}\right\rfloor +1)},y_3^{(\left\lfloor\frac{k}{5}\right\rfloor-3)} \right\}& k\geq 15
 %         \end{array} 
 %     \right.
 % \]
 
 \end{thm}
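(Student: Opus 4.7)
The plan is to sandwich $\gpk{K_{3,3}}$ between matching bounds. First I would verify that $\gp{K_{3,3}}=2$ and $\bigpds{K_{3,3}}=6$: two vertices on opposite sides of the bipartition already observe everything in the domination step, while two vertices $x_i,x_j$ on the same side observe the entire opposite side plus themselves, leaving only the third same-side vertex $x_\ell$ unobserved, which is then forced by any opposite-side vertex (since $x_i$ and $x_j$ are already observed). Hence every two-vertex subset of $V(K_{3,3})$ is a power dominating set, so $\bigpds{K_{3,3}} = 6$.

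For the upper bound, Corollary \ref{cor:2Qbound} applied with $\gp{K_{3,3}}=2$ and $\bigpds{K_{3,3}}=6$ yields $\gpk{K_{3,3}}\leq \lc 6(k+1)/5\rc$ for $k\geq 1$, while the $k=0$ case is just $\gp{K_{3,3}}=2$. A short floor/ceiling manipulation — either casework on $k \bmod 5$, or the identity $\lc (k+1)/5\rc = \lf k/5\rf + 1$ (which follows from Proposition \ref{prop:ceiltofloor}) — rewrites this ceiling as $k+\lf k/5\rf +2$.

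For the matching lower bound, I would first use Observation \ref{obs:bip_pow_set} with $n=m=3$ to conclude that a subset $T\subseteq V(K_{3,3})$ is a power dominating set if and only if $|T|\geq 2$; consequently a multiset $S$ on $V(K_{3,3})$ is a $k$-rPDS if and only if for every submultiset $F\subseteq S$ with $|F|=k$ the support of $S\setminus F$ has at least two distinct vertices. Now let $S$ be a minimum $k$-rPDS, put $p=|S|$, and let $M=\max_{v}\pmus{v}$, attained at some $v^{\star}$. If $p-M\leq k$, one can form an adversarial $F$ of size $k$ consisting of all $p-M$ PMUs outside $v^{\star}$ together with $k-(p-M)$ PMUs at $v^{\star}$, leaving $S\setminus F$ supported only on $v^{\star}$ — a contradiction. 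Therefore $p\geq k+1+M$, and combining with the trivial bound $p\leq 6M$ (six vertices, each with multiplicity at most $M$) forces $M \geq \lc (k+1)/5\rc$, so $p\geq k+1+\lc (k+1)/5\rc = k+\lf k/5\rf +2$.

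The main obstacle is the support characterization: the reason $K_{3,3}$ yields such a clean formula is precisely that every two-vertex subset is already a power dominating set, which is what collapses the $k$-rPDS condition to a two-constraint pigeonhole optimization over PMU multiplicities. Once that characterization is established, both the upper bound (handed over by Corollary \ref{cor:2Qbound}) and the lower bound (the multiplicity optimization above) fall out with only routine floor/ceiling arithmetic in between.
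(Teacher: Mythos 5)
Your proposal is correct and follows essentially the same route as the paper: the upper bound is obtained identically via Corollary \ref{cor:2Qbound} with $\gp{K_{3,3}}=2$, $\bigpds{K_{3,3}}=6$, and the ceiling-to-floor identity of Proposition \ref{prop:ceiltofloor}, while the lower bound rests on the same pigeonhole idea of concentrating the $k$ failed PMUs on everything outside a heaviest vertex. The only organizational difference is that your pair of inequalities $|S|\geq k+1+M$ and $|S|\leq 6M$ yields the bound directly for every $k$, whereas the paper establishes it only for $k\equiv 0\pmod{5}$ and interpolates the remaining cases with Corollary \ref{cor:incrj} --- a minor streamlining in your favor.
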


 \begin{proof} We begin by observing that any two vertices of $K_{3,3}$ form a power dominating set, and so $\bigpds{K_{3,3}}=6$. First we prove the lower bound $k+ \left \lfloor \frac{k}{5} \right \rfloor +2 \leq \gpk{K_{3,3}}$ where $k=5q$. 
 % We proceed by induction on $m$. For the base case, $m=0$, we note that $\{x_1,x_2\}$ is a  minimum power dominating set and so $\gpk{K_{3,3}}=2=0+\left\lfloor \frac{0}{5} \right\rfloor+2$. Now assume that for some $m\geq 0$, if $k\leq 5m$ then $\gpk{K_{3,3}}\geq k + \left\lfloor \frac{k}{5} \right\rfloor + 2$. Consider $k=5(m+1)$. By Corollary \ref{cor:incrj}, we see that
 % \[ \gpkother{5m}{K_{3,3}} +5 \leq \gpkother{5(m+1)}{K_{3,3}}.\]
 % By the induction hypothesis, $\gpkother{5m}{K_{3,3}} \geq 5m+\left\lceil\frac{5m}{5} \right\rceil +2 = 6m+2$. Thus
 % \[6m+7\leq \gpkother{5(m+1)}{K_{3,3}}. \]
 Assume for contradiction that there exists a $\gpkotherset{5q}$ $S$ of size $5q+ \left \lfloor \frac{5q}{5} \right \rfloor +1 = 6q+1$. By the Pigeonhole Principle, some $x_i$ contains at least
 $\left\lceil\frac{6q+1}{6}\right\rceil = q + 1$
 of the PMUs. Observe that $|S|-5q=q+1$. Thus, we can remove $5q$ PMUs so that some vertex $x_i$ contains all remaining PMUs. This is a contradiction, as $\gp{K_{3,3}}=2$.
 %By Proposition \ref{prop:pigeonholebound} we must have
 %\begin{align*}
 %6m+7 - 5(m+1) &\geq \left\lceil \frac{6m+7}{6} \right\rceil + 2 -1\\
 %m+2           &\geq m+1 +\left\lceil \frac{1}{6} \right\rceil +1\\
 %m+2           &\geq m+3,
 %\end{align*}
 %%%% wait next sentence should that be q+1 or q? Originally said 5(q+1), changed to 5q
 Thus, $\gpkother{5q}{K_{3,3}} \geq 6q+2 = 5q + \left \lfloor \frac{5q}{5} \right\rfloor +2$, as desired. The lower bounds when $k$ is not a multiple of $5$ then follow by Corollary \ref{cor:incrj}.
 
 For the upper bound, observe that by Corollary \ref{cor:2Qbound}, 
 \begin{align*}
 \gpk{K_{3,3}} &\leq \left\lceil \frac{6(k+1)}{5} \right\rceil  \\
         &= k+1+\left\lceil \frac{k+1}{5} \right\rceil.  \\
 \end{align*}
 Then by Proposition \ref{prop:ceiltofloor}, we see that
 \begin{align*}
 \gpk{K_{3,3}} &\leq k+1 + \left\lfloor \frac{k+1-1}{5} \right\rfloor +1\\
         &= k + \left\lfloor \frac{k}{5} \right\rfloor +2. \qedhere
 \end{align*}
 %To see that the given sets achieve the bound, note that any two vertices form a power dominating set of $K_{3,3}$. Observe that for each of the given sets $S$, after $k$ vertices are removed from $S$ there are $\left\lfloor \frac{k}{5} \right\rfloor+2$ vertices remaining. As the maximum number of PMUs at any one vertex is always $\left\lfloor \frac{k}{5} \right\rfloor+1$, there must be at minimum two distinct vertices remaining. Thus, the suggested sets $S$ are minimum $k$-rPDS for all $k$.
 \end{proof}

 Theorem \ref{thm:k33} gives an example of a graph for which Theorem \ref{thm:gpQbound} is tight and the structure of the $\gpkset$ suggested by the proof of Theorem \ref{thm:gpQbound} is shown in Figure \ref{fig:k33}. A larger family of complete bipartite graphs follows the same pattern, as shown in Theorem \ref{thm:k3bgeqk/3+3}.

 %%%%%%%%%%%%%% K33 Figure
 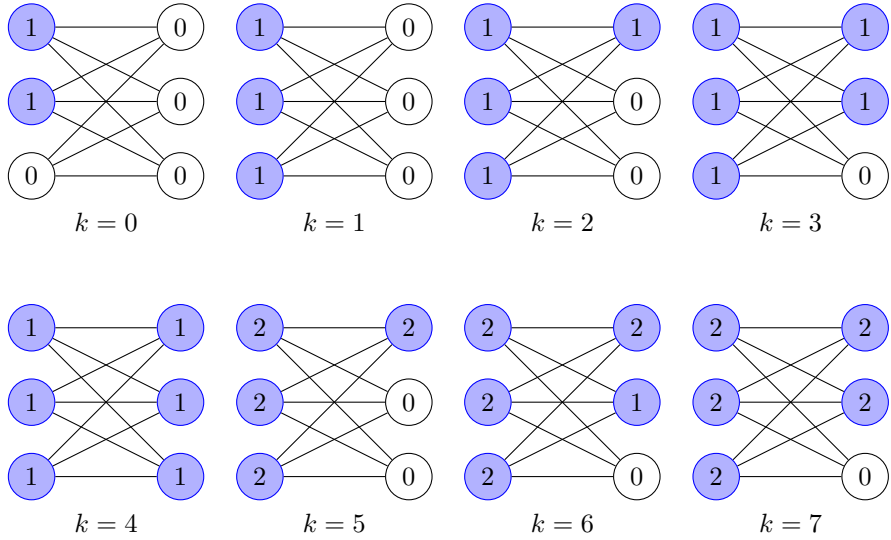
\begin{figure}[hbtp]
 \begin{center}
 \begin{tabular}{cccc}
 %%k=0
 \begin{tikzpicture}[shorten >=9pt, shorten <=9pt, scale=0.99]
 %%% vertices
 \node[circle, draw=outline, fill=pmu] at (-1,1) {$1$};
 \node[circle, draw=outline, fill=pmu] at (-1,0) {$1$};
 \node[circle, draw=black, fill=white] at (-1,-1) {$0$};
 \node[circle, draw=black, fill=white] at (1,1) {$0$};
 \node[circle, draw=black, fill=white] at (1,0) {$0$};
 \node[circle, draw=black, fill=white] at (1,-1) {$0$};
 %%% edges
 \draw[shorten >=9pt, shorten <=9pt] (-1,1) -- (1,1);
 \draw[shorten >=9pt, shorten <=9pt] (-1,1) -- (1,0);
 \draw[shorten >=9pt, shorten <=9pt] (-1,1) -- (1,-1);
 \draw[shorten >=9pt, shorten <=9pt] (-1,0) -- (1,1);
 \draw[shorten >=9pt, shorten <=9pt] (-1,0) -- (1,0);
 \draw[shorten >=9pt, shorten <=9pt] (-1,0) -- (1,-1);
 \draw[shorten >=9pt, shorten <=9pt] (-1,-1) -- (1,1);
 \draw[shorten >=9pt, shorten <=9pt] (-1,-1) -- (1,0);
 \draw[shorten >=9pt, shorten <=9pt] (-1,-1) -- (1,-1);
 \end{tikzpicture}
 &
 %%k=1
 \begin{tikzpicture}[shorten >=9pt, shorten <=9pt, scale=0.99]
 %%% vertices
 \node[circle, draw=outline, fill=pmu] at (-1,1) {$1$};
 \node[circle, draw=outline, fill=pmu] at (-1,0) {$1$};
 \node[circle, draw=outline, fill=pmu] at (-1,-1) {$1$};
 \node[circle, draw=black, fill=white] at (1,1) {$0$};
 \node[circle, draw=black, fill=white] at (1,0) {$0$};
 \node[circle, draw=black, fill=white] at (1,-1) {$0$};
 %%% edges
 \draw[shorten >=9pt, shorten <=9pt] (-1,1) -- (1,1);
 \draw[shorten >=9pt, shorten <=9pt] (-1,1) -- (1,0);
 \draw[shorten >=9pt, shorten <=9pt] (-1,1) -- (1,-1);
 \draw[shorten >=9pt, shorten <=9pt] (-1,0) -- (1,1);
 \draw[shorten >=9pt, shorten <=9pt] (-1,0) -- (1,0);
 \draw[shorten >=9pt, shorten <=9pt] (-1,0) -- (1,-1);
 \draw[shorten >=9pt, shorten <=9pt] (-1,-1) -- (1,1);
 \draw[shorten >=9pt, shorten <=9pt] (-1,-1) -- (1,0);
 \draw[shorten >=9pt, shorten <=9pt] (-1,-1) -- (1,-1);
 \end{tikzpicture}
 &
 %%%k=2
 \begin{tikzpicture}[shorten >=9pt, shorten <=9pt, scale=0.99]
 %%% vertices
 \node[circle, draw=outline, fill=pmu] at (-1,1) {$1$};
 \node[circle, draw=outline, fill=pmu] at (-1,0) {$1$};
 \node[circle, draw=outline, fill=pmu] at (-1,-1) {$1$};
 \node[circle, draw=outline, fill=pmu] at (1,1) {$1$};
 \node[circle, draw=black, fill=white] at (1,0) {$0$};
 \node[circle, draw=black, fill=white] at (1,-1) {$0$};
 %%% edges
 \draw[shorten >=9pt, shorten <=9pt] (-1,1) -- (1,1);
 \draw[shorten >=9pt, shorten <=9pt] (-1,1) -- (1,0);
 \draw[shorten >=9pt, shorten <=9pt] (-1,1) -- (1,-1);
 \draw[shorten >=9pt, shorten <=9pt] (-1,0) -- (1,1);
 \draw[shorten >=9pt, shorten <=9pt] (-1,0) -- (1,0);
 \draw[shorten >=9pt, shorten <=9pt] (-1,0) -- (1,-1);
 \draw[shorten >=9pt, shorten <=9pt] (-1,-1) -- (1,1);
 \draw[shorten >=9pt, shorten <=9pt] (-1,-1) -- (1,0);
 \draw[shorten >=9pt, shorten <=9pt] (-1,-1) -- (1,-1);
 \end{tikzpicture}
 &
 %%%k=3
 \begin{tikzpicture}[shorten >=9pt, shorten <=9pt, scale=0.99]
 %%% vertices
 \node[circle, draw=outline, fill=pmu] at (-1,1) {$1$};
 \node[circle, draw=outline, fill=pmu] at (-1,0) {$1$};
 \node[circle, draw=outline, fill=pmu] at (-1,-1) {$1$};
 \node[circle, draw=outline, fill=pmu] at (1,1) {$1$};
 \node[circle, draw=outline, fill=pmu] at (1,0) {$1$};
 \node[circle, draw=black, fill=white] at (1,-1) {$0$};
 %%% edges
 \draw[shorten >=9pt, shorten <=9pt] (-1,1) -- (1,1);
 \draw[shorten >=9pt, shorten <=9pt] (-1,1) -- (1,0);
 \draw[shorten >=9pt, shorten <=9pt] (-1,1) -- (1,-1);
 \draw[shorten >=9pt, shorten <=9pt] (-1,0) -- (1,1);
 \draw[shorten >=9pt, shorten <=9pt] (-1,0) -- (1,0);
 \draw[shorten >=9pt, shorten <=9pt] (-1,0) -- (1,-1);
 \draw[shorten >=9pt, shorten <=9pt] (-1,-1) -- (1,1);
 \draw[shorten >=9pt, shorten <=9pt] (-1,-1) -- (1,0);
 \draw[shorten >=9pt, shorten <=9pt] (-1,-1) -- (1,-1);
 \end{tikzpicture}
 \\
 $k=0$ & $k=1$ & $k=2$ & $k=3$ \\
 &&&\\
 &&&\\
 %%%%%%%%%%%%%%%%%%%%%%%%%% Next row
 %%%k=4
 \begin{tikzpicture}[shorten >=9pt, shorten <=9pt, scale=0.99]
 %%% vertices
 \node[circle, draw=outline, fill=pmu] at (-1,1) {$1$};
 \node[circle, draw=outline, fill=pmu] at (-1,0) {$1$};
 \node[circle, draw=outline, fill=pmu] at (-1,-1) {$1$};
 \node[circle, draw=outline, fill=pmu] at (1,1) {$1$};
 \node[circle, draw=outline, fill=pmu] at (1,0) {$1$};
 \node[circle, draw=outline, fill=pmu] at (1,-1) {$1$};
 %%% edges
 \draw[shorten >=9pt, shorten <=9pt] (-1,1) -- (1,1);
 \draw[shorten >=9pt, shorten <=9pt] (-1,1) -- (1,0);
 \draw[shorten >=9pt, shorten <=9pt] (-1,1) -- (1,-1);
 \draw[shorten >=9pt, shorten <=9pt] (-1,0) -- (1,1);
 \draw[shorten >=9pt, shorten <=9pt] (-1,0) -- (1,0);
 \draw[shorten >=9pt, shorten <=9pt] (-1,0) -- (1,-1);
 \draw[shorten >=9pt, shorten <=9pt] (-1,-1) -- (1,1);
 \draw[shorten >=9pt, shorten <=9pt] (-1,-1) -- (1,0);
 \draw[shorten >=9pt, shorten <=9pt] (-1,-1) -- (1,-1);
 \end{tikzpicture} 
 &
 %%%k=5
 \begin{tikzpicture}[shorten >=9pt, shorten <=9pt, scale=0.99]
 %%% vertices
 \node[circle, draw=outline, fill=pmu] at (-1,1) {$2$};
 \node[circle, draw=outline, fill=pmu] at (-1,0) {$2$};
 \node[circle, draw=outline, fill=pmu] at (-1,-1) {$2$};
 \node[circle, draw=outline, fill=pmu] at (1,1) {$2$};
 \node[circle, draw=black, fill=white] at (1,0) {$0$};
 \node[circle, draw=black, fill=white] at (1,-1) {$0$};
 %%% edges
 \draw[shorten >=9pt, shorten <=9pt] (-1,1) -- (1,1);
 \draw[shorten >=9pt, shorten <=9pt] (-1,1) -- (1,0);
 \draw[shorten >=9pt, shorten <=9pt] (-1,1) -- (1,-1);
 \draw[shorten >=9pt, shorten <=9pt] (-1,0) -- (1,1);
 \draw[shorten >=9pt, shorten <=9pt] (-1,0) -- (1,0);
 \draw[shorten >=9pt, shorten <=9pt] (-1,0) -- (1,-1);
 \draw[shorten >=9pt, shorten <=9pt] (-1,-1) -- (1,1);
 \draw[shorten >=9pt, shorten <=9pt] (-1,-1) -- (1,0);
 \draw[shorten >=9pt, shorten <=9pt] (-1,-1) -- (1,-1);
 \end{tikzpicture} 
 &
 %%%k=6
 \begin{tikzpicture}[shorten >=9pt, shorten <=9pt, scale=0.99]
 %%% vertices
 \node[circle, draw=outline, fill=pmu] at (-1,1) {$2$};
 \node[circle, draw=outline, fill=pmu] at (-1,0) {$2$};
 \node[circle, draw=outline, fill=pmu] at (-1,-1) {$2$};
 \node[circle, draw=outline, fill=pmu] at (1,1) {$2$};
 \node[circle, draw=outline, fill=pmu] at (1,0) {$1$};
 \node[circle, draw=black, fill=white] at (1,-1) {$0$};
 %%% edges
 \draw[shorten >=9pt, shorten <=9pt] (-1,1) -- (1,1);
 \draw[shorten >=9pt, shorten <=9pt] (-1,1) -- (1,0);
 \draw[shorten >=9pt, shorten <=9pt] (-1,1) -- (1,-1);
 \draw[shorten >=9pt, shorten <=9pt] (-1,0) -- (1,1);
 \draw[shorten >=9pt, shorten <=9pt] (-1,0) -- (1,0);
 \draw[shorten >=9pt, shorten <=9pt] (-1,0) -- (1,-1);
 \draw[shorten >=9pt, shorten <=9pt] (-1,-1) -- (1,1);
 \draw[shorten >=9pt, shorten <=9pt] (-1,-1) -- (1,0);
 \draw[shorten >=9pt, shorten <=9pt] (-1,-1) -- (1,-1);
 \end{tikzpicture} 
 &
 %%%k=7
 \begin{tikzpicture}[shorten >=9pt, shorten <=9pt, scale=0.99]
 %%% vertices
 \node[circle, draw=outline, fill=pmu] at (-1,1) {$2$};
 \node[circle, draw=outline, fill=pmu] at (-1,0) {$2$};
 \node[circle, draw=outline, fill=pmu] at (-1,-1) {$2$};
 \node[circle, draw=outline, fill=pmu] at (1,1) {$2$};
 \node[circle, draw=outline, fill=pmu] at (1,0) {$2$};
 \node[circle, draw=black, fill=white] at (1,-1) {$0$};
 %%% edges
 \draw[shorten >=9pt, shorten <=9pt] (-1,1) -- (1,1);
 \draw[shorten >=9pt, shorten <=9pt] (-1,1) -- (1,0);
 \draw[shorten >=9pt, shorten <=9pt] (-1,1) -- (1,-1);
 \draw[shorten >=9pt, shorten <=9pt] (-1,0) -- (1,1);
 \draw[shorten >=9pt, shorten <=9pt] (-1,0) -- (1,0);
 \draw[shorten >=9pt, shorten <=9pt] (-1,0) -- (1,-1);
 \draw[shorten >=9pt, shorten <=9pt] (-1,-1) -- (1,1);
 \draw[shorten >=9pt, shorten <=9pt] (-1,-1) -- (1,0);
 \draw[shorten >=9pt, shorten <=9pt] (-1,-1) -- (1,-1);
 \end{tikzpicture} 
 \\
 $k=4$ & $k=5$ & $k=6$ & $k=7$\\
 \end{tabular}
 \end{center}
 \caption{Minimum $k$-rPDS for $K_{3,3}$ for $k=0,1,\ldots,7$.}
 \label{fig:k33}
 \end{figure}
 %%%%%%%%%%%%%% K33 Figure END

 \begin{thm}\label{thm:k3bgeqk/3+3}
  Let $k\geq 0$. Let $K_{3,m}$ be the complete bipartite graph with parts $X= \{x_1, x_2, x_3\}$ and $Y=\{y_1, y_2, \ldots, y_m\}$. For $m\geq \left\lfloor \frac{k}{3} \right\rfloor + 3$, \[\gpk{K_{3,m}} = k + \left\lfloor \frac{k}{3} \right\rfloor +2.\] 
 \end{thm}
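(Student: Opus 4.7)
The plan is to prove the upper and lower bounds separately, with Observation \ref{obs:bip_pow_set} as the organizing tool. For the upper bound, writing $q = \lfloor k/3 \rfloor$, I would exhibit an explicit $k$-rPDS $S$ of size $k + q + 2$ supported on $\{x_1, x_2, x_3, y_1\}$: take $(\pmus{x_1}, \pmus{x_2}, \pmus{x_3}, \pmus{y_1}) = (q+1, q+1, q, q)$ if $k = 3q$, $(q+1, q+1, q+1, q)$ if $k = 3q+1$, and $(q+1, q+1, q+1, q+1)$ if $k = 3q+2$. Since only $y_1 \in Y$ ever carries a PMU, after any removal at most one vertex of $Y$ has a PMU, so alternative (C) of Observation \ref{obs:bip_pow_set} (at least $m-1$ vertices of $Y$) is unreachable. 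Therefore the remaining underlying set can fail to be a PDS only via one of two minimal attacks: attack (α), delete every PMU on $X$; or attack (β), delete every PMU on $Y$ together with all PMUs on the two $x_i$'s with the fewest PMUs. A routine residue-class check shows that in each case both attacks cost exactly $k+1$, so $k$ deletions always leave a PDS.

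For the lower bound, let $S$ be any $k$-rPDS, write $a_1 \geq a_2 \geq a_3$ for the values of $\pmus{x_i}$, set $b = \sum_{y \in Y}\pmus{y}$, and let $t$ be the number of $y \in Y$ with $\pmus{y} \geq 1$. Attack (β) applied to $S$ uses $a_2 + a_3 + b$ deletions and leaves an underlying set contained in $\{x_1\}$, which fails all three alternatives of Observation \ref{obs:bip_pow_set}; hence $a_2 + a_3 + b \geq k+1$. When $t \leq m-2$, attack (α) likewise produces a remaining underlying set contained in at most $m-2$ vertices of $Y$, again failing the observation; hence $a_1 + a_2 + a_3 \geq k+1$. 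The ordering $a_1 \geq a_2 \geq a_3$ forces $3a_1 \geq a_1 + a_2 + a_3 \geq k+1$, so $a_1 \geq \lceil(k+1)/3\rceil = \lfloor k/3 \rfloor + 1$, and combining the two inequalities gives
\[ |S| = a_1 + (a_2 + a_3 + b) \geq (\lfloor k/3 \rfloor + 1) + (k+1) = k + \lfloor k/3 \rfloor + 2. \]

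The main obstacle is the residual case $t \in \{m-1, m\}$, which is exactly where the hypothesis $m \geq \lfloor k/3 \rfloor + 3$ is used. Here condition (C) of Observation \ref{obs:bip_pow_set} can survive after a removal, so attack (α) now additionally requires clearing one or two $y$-vertices; the inequality from attack (α) weakens to $a_1 + a_2 + a_3 \geq k + 1 - b^*_1$ (respectively $k + 1 - b^*_1 - b^*_2$ when $t = m$), where $b^*_1 \leq b^*_2$ denote the two smallest positive $Y$-multiplicities. The plan is to compensate by noting that each of the $t$ positive $Y$-vertices has multiplicity at least $b^*_1 \geq 1$, so $b \geq (m-1)b^*_1$ when $t = m-1$ and $b \geq b^*_1 + (m-1)b^*_2$ when $t = m$. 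Substituting into the weakened attack-(α) inequality and using $b^*_1, b^*_2 \geq 1$ together with $m - 2 \geq \lfloor k/3 \rfloor + 1$, a brief computation yields $|S| \geq k + m - 1 \geq k + \lfloor k/3 \rfloor + 2$, completing the proof.
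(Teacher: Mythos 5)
Your proof is correct, but it takes a genuinely different route from the paper on both halves. For the lower bound, the paper applies the Pigeonhole Principle to the four ``boxes'' $x_1,x_2,x_3$ and the whole of $Y$ to rule out a set of size $4q+1$ only in the case $k=3q$, and then fills in the remaining residues via Corollary \ref{cor:incrj}; you instead argue uniformly in $k$ by isolating two explicit failure modes, extracting the linear inequalities $a_2+a_3+b\geq k+1$ and $a_1+a_2+a_3\geq k+1$, and adding them after bounding $a_1\geq\lc (k+1)/3\rc$. Your version is self-contained, avoids the subadditivity corollary, localizes where the hypothesis $m\geq\lf k/3\rf+3$ is actually needed (your residual case $t\geq m-1$), and its two-attack bookkeeping is closer in spirit to the paper's later Theorems \ref{thm:bipartite_lower} and \ref{thm:bipartite_upper}. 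For the upper bound, the paper simply cites $\bigpds{K_{3,m}}=4$ together with Corollary \ref{cor:2Qbound} (and defers $m=3$ to Theorem \ref{thm:k33}), whereas you write out the balanced multiset on $\{x_1,x_2,x_3,y_1\}$ that the general construction would produce and verify it by hand, which also covers $m=3$ uniformly; the paper's route is shorter and exhibits the theorem as a tightness instance of Theorem \ref{thm:gpQbound}, while yours buys independence from that machinery. Two small points to tidy: your claim that ``both attacks cost exactly $k+1$'' is off by one for attack $(\alpha)$ when $k\not\equiv 2\pmod 3$ (it costs $k+2$ there), though only the inequality $\geq k+1$ is needed; and your upper-bound verification uses the converse of Observation \ref{obs:bip_pow_set} (that any of the three conditions suffices for a power dominating set of $K_{n,m}$), which is true and is used in exactly the same implicit way in the paper's proof of Theorem \ref{thm:bipartite_upper}, but is not literally what the observation states, so a one-line justification would be appropriate.
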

 
 \begin{proof}
 First we prove the lower bound when $k=3q$. Assume for eventual contradiction that there exists a $\gpkotherset{3q}$, $S$, of size $3q + \left\lfloor \frac{3q}{3} \right\rfloor +1 = 4q+1$. Let $y=\sum_{i=1}^m \pmus{y_i}$. Then 
 \[\pmus{x_1}+\pmus{x_2}+\pmus{x_3}+y = 4q+1.\]
 By the Pigeonhole Principle, we see that one of $x_1, x_2, x_3$ or $y$ must represent at least
 \[\left\lceil \frac{4m+1}{4} \right\rceil = q+1\]
 of the PMUs. Observe that $|S|-3q=q+1$. Thus, we can remove $3q$ PMUs such that either:  
 \begin{enumerate}
 
 \item All $q+1$ remaining PMUs are on a single $x_i$, which is a contradiction as this is only one vertex and $\gp{K_{3,m}}=2$; or
 
 \item All $q+1$ remaining PMUs are on the $y_i$ vertices. In order for the PMUs on the $y_i$'s to form a power dominating set of $K_{3,m}$, $m-1$ of the $y_i$'s must have a PMU. However, we also have that
 \begin{align*}
 m-1     &\geq \left\lfloor\frac{3q}{3}\right\rfloor+3-1\\
         &=q+2.
 \end{align*}
 This means that at least $q+2$ PMUs are needed but after $3q$ PMUs are removed only $q+1$ PMUs remain, a contradiction.
 \end{enumerate}
 
 Therefore, $\gpkother{3q}{K_{3,m}} > 4q+1$. Hence, $\gpkother{3q}{K_{3,m}} \geq 4q+2=3q+\left\lfloor \frac{3q}{3}\right\rfloor +2$, as desired. The lower bounds for the remaining cases then follow by Corollary \ref{cor:incrj}. 
 
 For the upper bound, the case of $k=0$ is given by the power domination number. If $m=3$ we need only consider when $k=0,1,2$; this is covered by Theorem \ref{thm:k33}. If $m\geq 4$ and $k\geq 1$, we have $\bigpds{K_{3,m}}=4$. Then by Corollary \ref{cor:2Qbound},
 \begin{align*}
 \gpk{K_{3,m}} &\leq \left\lceil \frac{4(k+1)}{3} \right\rceil\nonumber \\
         &= k+1+\left\lceil \frac{k+1}{3} \right\rceil \nonumber
 \end{align*}
 and by Proposition \ref{prop:ceiltofloor},
 \begin{align*}
 \gpk{K_{3,m}} &\leq k+1 + \left\lfloor \frac{k+1-1}{3} \right\rfloor +1\\
         &= k + \left\lfloor \frac{k}{3} \right\rfloor +2.\nonumber \qedhere
 \end{align*}
 \end{proof}
 
%  We can also utilize Proposition \ref{prop:gpklowQ} to obtain the following values.
 
%  \begin{cor} Let $K_{3,b}$ be the complete bipartite graph with $b\geq 4$. Then $\gpkother{1}{K_{3,b}}=3$ and $\gpkother{2}{K_{3,b}}=4$.
%  \end{cor}

Next, we examine complete bipartite graphs with at least 4 vertices in each part, beginning with the lower bound in the following theorem.

% We also note this useful remark that we later use to distribute PMUs along the vertices of $K_{n,m}$.

\begin{thm}\label{thm:bipartite_lower}
Let $m \geq n\geq 4$  and $k\geq 1$. Then,  
\[\gpk{K_{n,m}} \geq 
\begin{cases}
    2(k+1)- 4\lf \frac{k}{n+2} \rf & k \equiv i \Mod{n+2},\, \, 0\leq i\leq n-4\\
    k+n-1 +(n-2)\lf \frac{k}{n+2}\rf  & \text{otherwise}
\end{cases}\]
\end{thm}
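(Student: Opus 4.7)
The plan is to pin down the minimum size of a hypothetical $k$-rPDS $S$ by translating the $k$-robustness condition into numerical constraints and then analyzing them case by case. Let $a_i=\pmus{x_i}$, $b_j=\pmus{y_j}$, $a=\sum_i a_i$, $b=\sum_j b_j$, and let $v_X, v_Y$ denote the number of vertices on each side with positive multiplicity. By Observation~\ref{obs:bip_pow_set}, $S\setminus F$ fails to be a power dominating set precisely when one of $X, Y$ has been emptied and the support on the other side is strictly less than $n-1$ or $m-1$ respectively. The adversary therefore has exactly two strategies (empty $X$ or empty $Y$), and for $S$ to be a $k$-rPDS each must cost more than $k$, giving
\[a + T_Y \geq k+1 \qquad \text{and} \qquad b + T_X \geq k+1,\]
where $T_X$ (respectively $T_Y$) is the sum of the $\max(0,v_X-n+2)$ smallest $a_i$ (respectively the $\max(0,v_Y-m+2)$ smallest $b_j$).

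I would then case-split on $c_X:=\max(0, v_X - n + 2)$ and $c_Y:=\max(0, v_Y - m + 2)$, each lying in $\{0,1,2\}$. When $c_X = c_Y = 0$ one reads off $a, b \geq k+1$ immediately, so $|S|\geq 2(k+1)$, and one verifies this exceeds the claimed bound for all $k$ and $n$. For the other cases I would apply the Reverse Pigeonhole Principle (Remark~\ref{rem:rev_pigeonhole}) to the distribution of the $a$ PMUs on the $v_X\in\{n-1,n\}$ occupied vertices of $X$, obtaining $T_X \leq \lfloor a/(n-1)\rfloor$ when $v_X = n-1$ and $T_X\leq 2\lfloor a/n\rfloor + 1$ when $v_X = n$, with a symmetric statement for $T_Y$. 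Adding the two adversary inequalities gives $|S| \geq 2(k+1) - T_X - T_Y$; substituting the Reverse Pigeonhole bounds produces an implicit inequality roughly of the form $|S|(n+2)/n \geq 2(k+1)$, which upon solving yields the desired lower bound on $|S|$ scaled by $n/(n+2)$.

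The appearance of $n+2$ in the denominator, and hence the $\lfloor k/(n+2)\rfloor$ term in the formula, comes precisely from combining the $n$ vertices of $X$ with the $2$ extra removals the adversary is allowed at the smallest-multiplicity occupied $X$-vertices (and symmetrically on $Y$). To finish, I would write $k = (n+2)q + r$ with $0\leq r\leq n+1$, identify the optimal distribution of PMUs in each residue class, and convert the resulting ceiling expressions to floors via Propositions~\ref{prop:ceilfracfix} and \ref{prop:ceiltofloor} and Corollary~\ref{cor:ceilfuncmultbound}. The range $0\leq r\leq n-4$ corresponds to a nearly-balanced split of PMUs between the two sides and yields the expression $2(k+1)-4\lfloor k/(n+2)\rfloor$, while $n-3\leq r\leq n+1$ forces a different extremal configuration in which one side contributes $n-1$ occupied vertices, producing $k+n-1+(n-2)\lfloor k/(n+2)\rfloor$.

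The main obstacle is the careful integer bookkeeping needed to verify that the Reverse Pigeonhole bounds on $T_X$ and $T_Y$ are tight enough to yield exactly the coefficients $4$ and $n-2$ multiplying $\lfloor k/(n+2)\rfloor$. In particular, the extra $+1$ in the bound $T_X\leq 2\lfloor a/n\rfloor+1$, which is attained only when $a\equiv n-1\pmod{n}$, is precisely what forces the split into two residue classes in the final formula, and threading this off-by-one correction through the algebra cleanly is the main technical hurdle.
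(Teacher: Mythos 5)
Your setup is sound and in fact captures the same core idea as the paper: by Observation \ref{obs:bip_pow_set} the only way to defeat $S$ is to empty one side and reduce the support of the other below $n-1$ (resp.\ $m-1$), which gives exactly your two constraints $a+T_Y\geq k+1$ and $b+T_X\geq k+1$. The gap is in how you extract the bound from them. Adding the two inequalities and replacing $T_X,T_Y$ by the aggregate pigeonhole bounds $T_X\lesssim 2a/n$, $T_Y\lesssim 2b/m$ yields only $|S|\geq 2n(k+1)/(n+2)$ (as you concede, the target ``scaled by $n/(n+2)$''), and this relaxation is \emph{not} off by a single unit that careful rounding recovers: for $i=n-4$ the deficit is roughly $4(i+1)/(n+2)$, i.e., $2$ or $3$ for moderate $n$. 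Concretely, for $K_{6,6}$ with $k=2$ the theorem asserts $\gpkother{2}{K_{6,6}}\geq 6$, but with $|S|=5$ one has $T_X,T_Y\leq 1$, so your summed inequality gives only $|S|\geq 4$; ruling out $|S|=5$ requires using the constraints jointly ($a\leq 2$ forces $T_Y\geq 1$, hence $v_Y\geq m-1$, hence $b\geq 5$ and $a=0$, hence $T_Y\geq 3$, impossible). So the step you defer as ``careful integer bookkeeping'' and ``identify the optimal distribution of PMUs in each residue class'' is not an off-by-one correction threaded through your algebra --- it is the entire content of the proof, and the summed, LP-style relaxation you build cannot be tightened to the stated constants. (Also, the split at $i=n-4$ versus $i=n-3$ does not come from the $+1$ in your bound $T_X\leq 2\lfloor a/n\rfloor+1$; it reflects which adversary strategy is binding in the extremal configuration.)

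For comparison, the paper sidesteps the optimization entirely with an explicit removal: assume $|S|$ is one below the claimed bound, write $k=q(n+2)+i$ and compute $|S|=2qn+2i+1$ in the first case; take all PMUs from the lighter side (at most $\lfloor |S|/2\rfloor=qn+i$ by Remark \ref{rem:rev_pigeonhole}, padded to exactly $qn+i$), then apply Remark \ref{rem:rev_pigeonhole} twice more on the heavier side to find two occupied vertices carrying at most $q$ PMUs each. The resulting failure set has size exactly $q(n+2)+i=k$ and leaves PMUs on at most $m-2$ vertices of one side, contradicting Observation \ref{obs:bip_pow_set}. If you want to salvage your framework, drop the summation, keep the two constraints separate within each $(c_X,c_Y)$ case, and carry out the integer minimization of $a+b$ --- but that is substantial work your proposal does not do.
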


% A \textit{multiset}, is a collection of items that allow for repition. We may write a multiset as $\{k_1\cdot a_1,\ldots k_n \cdot a_n \}$,
% where $k_i$ is the number of times $a_i$ is repeated.

\begin{proof}
Let $K_{n,m}$ be the complete bipartite graph with $V(K_{n,m}) = X\cup Y$, leaving the sizes of $X$ and $Y$ generic. Let $k = q(n+2) +i$ for $q\geq 0$
and $0\leq i < n+2.$
Observe the following:
\begin{align*}
    i &= k - q(n+2)\\
      &= k - \lf \frac{k}{n+2}\rf(n+2)
\end{align*}

 First suppose that $0\leq i\leq n-4$. For the sake of contradiction, 
    assume that there exists a $k$-robust power dominating set $S$ such that 
    $|S| = 2k+1- 4\lf \frac{k}{n+2} \rf$. Thus,
    \begin{align*}
         |S| &= 2k+1 + (n-2 - (n+2))\lf \frac{k}{n+2} \rf\\
             &= k+1 + k - (n+2)\lf \frac{k}{n+2} \rf + (n-2)\lf \frac{k}{n+2} \rf \\
             &= k + 1 + i + (n-2)\lf \frac{k}{n+2} \rf\\
             &= q(n+2) +i + 1 + i + q(n-2)\\
             &= q(n+2)+2i +1+q(n-2)\\
             &= 2qn +2i +1.
    \end{align*} 
    Without loss of generality, assume that $\pmus{X} \leq \pmus{Y}$. By Observation 
    \ref{rem:rev_pigeonhole}, $\pmus{X} \leq \lf\frac{2qn+2i+1}{2}\rf = qn +i \leq k$. 
    Observe that if $q = 0$, we have equality.
    Let $B\subseteq S$, 
    such that $|B| = qn +i$ and $B$ contains all the PMUs from vertices of $X$ (and possibly some
    from vertices of $Y$). Then, by definition, $S\setminus B$ contains only PMUs from vertices of $Y$, and 
    $|S\setminus B| = qn +i+1$. Note that if $q= 0$, we are distributing $i+1 \leq n-3$ PMUs amongst the
    vertices of $Y$, and by Observation \ref{obs:bip_pow_set}, $S\setminus B$ is not a power dominating set.
    By Remark \ref{rem:rev_pigeonhole},
    there exists a vertex $y_1 \in Y$, such that 
    \begin{align*}
        \pmuss{S\setminus B}{y_1}&\leq \lf\frac{qn+i+1}{|Y|}\rf \\
        &\leq \lf\frac{qn+n-4+1}{n}\rf \\
        & = \lf\frac{qn}{n}+\frac{n-3}{n}\rf \\
        &= q+\lf\frac{n-3}{n}\rf \\
        &= q.
    \end{align*}
   
    Let $B'\subseteq S\setminus B$ such that $|B'|= q$ and $B'$ contains all the PMUs
    from $y_1$ (and possibly some from other vertices of $Y$). Then, by definition, $S\setminus (B\cup B')$ 
    contains PMUs only from vertices of $Y\setminus\{y_1\}$ and $|S\setminus (B\cup B')|=q(n-1)+i+1$.
    Thus, by Remark \ref{rem:rev_pigeonhole} there exists a vertex, $y_2\in Y$, such that
    \begin{align*}
        \pmuss{S\setminus (B\cup B')}{y_2} &\leq \lf\frac{q(n-1)+i+1}{|Y|-1}\rf\\
        &\leq \lf\frac{q(n-1)+n-4+1}{n-1}\rf\\
        & = \lf\frac{q(n-1)}{n-1}+\frac{n-3}{n-1}\rf \\
        &= q+\lf\frac{n-3}{n}\rf \\
        &= q.
    \end{align*}
    Let $B''\subseteq S\setminus (B\cup B')$ such that 
    $|B''| = q$ and $B$ contains all the PMUs from $y_2$ (and possibly from other vertices of $Y$).
    Then, by definition $S\setminus (B\cup B' \cup B'')$ contains only PMUs from vertices of 
    $Y\setminus\{y_1,y_2\}$. Note that  $|B\cup B' \cup B''| = k$ and by Observation \ref{obs:bip_pow_set}, 
    $S\setminus (B\cup B' \cup B'')$ is not a power
    dominating set. Therefore, no such $S$ is a $k-$robust dominating set. \\

For the second case, suppose that $n-3\leq i\leq n+1$.
 By Proposition \ref{prop:basicbounds}, to show that $\gpk{K_{n,m}}\geq k+n-1 +(n-2)\lf \frac{k}{n+2}\rf$, we need
 only show it for $i = n-3$. For the sake of contradiction, suppose there exists a $k$-robust
 power dominating set, $S$, such that $|S| = k+n-2 +(n-2)\lf \frac{k}{n+2}\rf$. Thus, 
 \begin{align*}
     |S| &= q(n+2) +n-3 +n -2 +q(n-2)\\
         %&= (qn+2q +2n-4 +qn -2q)\\
         &= 2qn+2n-5 
 \end{align*}
 
 Without loss of generality, assume that $\pmus{X} \leq \pmus{Y}$. 
 By Remark \ref{rem:rev_pigeonhole},
 \begin{align*}
    \pmus{X} &\leq \lf \frac{2qn+2n-5 }{2}\rf \\ 
    &= qn +n -3 \\
    &\leq q(n+2)+n-3 \\ &= k. 
 \end{align*}
 Observe that if $q = 0$, we have equality.
 Let $B\subseteq S$, such that $|B| = qn +n -3$ and $B$ contains all the PMUs from $X$ (and possibly some
 from $Y$).  
 Then by definition, $S\setminus B$ contains only PMUs from vertices of $Y$ and $|S\setminus B| = qn +n -2$.
 Note that if $q= 0$, we are distributing $n-2$ PMUs amongst the
 vertices of $Y$, and by Observation \ref{obs:bip_pow_set}, $S\setminus B$ is not a power dominating set. By Remark \ref{rem:rev_pigeonhole}, there exists a vertex, $y_1 \in Y$, such that
 \begin{align*}
  \pmuss{S\setminus B}{y_1} &\leq \lf \frac{qn+n -1}{|Y|}\rf \\
    &\leq \lf \frac{qn+n -1}{n}\rf \\
     &\leq q.
 \end{align*}
  
 Let $B'\subseteq S\setminus B$, such that $|B'| = q$ and $B'$ contains all the vertices from $y_1$ (and possibly from other vertices of $Y$). Then, by definition, $S\setminus(B\cup B')$ contains only PMUs from vertices of $Y\setminus\{y_1\}$ and $|S\setminus(B\cup B')| = q(n-1)+n - 2$. Thus, by Remark \ref{rem:rev_pigeonhole}, there exists a vertex, $y_2\in Y$, such that,
 \begin{align*}
    \pmuss{S\setminus(B\cup B')}{y_2}&\leq \lf \frac{q(n-1)+n-2}{|Y|-1}\rf\\
    &\leq \lf\frac{q(n-1)+n-2}{n-1}\rf \\
    &\leq q.
 \end{align*}

 Let $B''\subseteq S\setminus (B\cup B')$ such that $|B''|= q$ and $B$ contains all  of the PMUs from $y_2$ (and possibly from other vertices of $Y$). Then by definition,  $S\setminus (B\cup B' \cup B'')$ has only PMUs from vertices of  $Y\setminus\{y_1,y_2\}$. Note that $|B\cup B' \cup B''|= k$ and by Observation \ref{obs:bip_pow_set},  $S\setminus (B\cup B' \cup B'')$ is not a power dominating set. 
 Therefore, no such $S$ is a $k$-robust power dominating set, a contradiction for the second case.
\end{proof}

Note that the bound found in Theorem \ref{thm:bipartite_lower} is not always tight. For example, we observe that $\gpkother{4}{K_{4,6}} > 7$. To see this, suppose for contradiction that there exists a $k$-robust power dominating set $S$ such that $|S| = 7$. Let the parts of $K_{4,6}$ be $X$ and $Y$ but leave the sizes of $X$ and $Y$ generic. Then, one side, say $X$, has at most $3$ PMUs. If $Y$ has $6$ vertices, then removing all PMUs from $X$ leaves $4$ PMUs on $Y$, and so $S$ is not a $4$-rPDS. Thus, $Y$ has $4$ vertices and $X$ has 6 vertices. We then consider if we remove $4$ PMUs from $Y$. If all of the $3$ remaining PMUs are on $X$, then $S$ is not a $4$-rPDS. Thus, the PMUs must be some remaining PMUs on $Y$ and some on $X$, and so $X$ has at most $2$ PMUs. Thus, $Y$ has either $5$, $6$, or $7$ PMUs. In any case, we can remove 2 PMUs so that there are only $5$ PMUs on $Y$. However, we can still remove 2 PMUs, which leaves us with at most 2 vertices of $Y$ that contain PMUs and no vertices of $X$ containing PMUs, which is not a power dominating set of $K_{4,6}$. Therefore, we see that $\gpkother{4}{K_{4,6}} > 7$.

Next, we provide an upper bound for complete bipartite graphs.

\begin{thm}\label{thm:bipartite_upper}
Let $m\geq n\geq 4$, and $k\geq 1$ 
\[\gpk{K_{n,m}} \leq 
\begin{cases}
    2(k+1)+(m-n-4)\lf \frac{k}{n+2}\rf & k \equiv i \Mod{n+2},\, \, 0\leq i\leq n-4\\
    k+m-1 +(m-2)\lf\frac{k}{n+2}\rf  & \text{otherwise}
\end{cases}\]
\end{thm}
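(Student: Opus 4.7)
The plan is to construct, in each case, an explicit $k$-robust power dominating multiset $S$ of exactly the stated size and verify robustness directly. Writing $k = q(n+2) + i$ with $q = \lf k/(n+2)\rf$ and $0 \leq i \leq n+1$, the two subcases of the bound correspond to $0 \leq i \leq n-4$ and $n-3 \leq i \leq n+1$. The verification will exploit a simple equivalent characterization of the $k$-rPDS property on $K_{n,m}$: by Observation \ref{obs:bip_pow_set}, a multiset of PMUs fails to be a PDS exactly when one part has total multiplicity $0$ while the other part has PMUs on at most (its size $-2$) of its vertices. Thus, letting $\sigma_X$ be the total multiplicity on $X$ and $\mu_Y^{(1)}\leq \mu_Y^{(2)}$ be the two smallest vertex-multiplicities on $Y$ (with the symmetric quantities defined for $X$ and $Y$ swapped), the multiset $S$ is a $k$-rPDS if and only if $\sigma_X + \mu_Y^{(1)} + \mu_Y^{(2)} > k$ and $\sigma_Y + \mu_X^{(1)} + \mu_X^{(2)} > k$, these being the costs of the two cheapest adversary attacks.

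For Case 1 ($0\leq i\leq n-4$), I will place $q+i+1$ PMUs on one distinguished vertex of $X$, $q$ PMUs on each of the remaining $n-1$ vertices of $X$, and make the symmetric choice on $Y$. A direct count gives $|S| = q(n+m) + 2(i+1) = 2(k+1) + (m-n-4)q$, matching the bound. For this construction, the first inequality becomes $(qn + i + 1) + 2q = k+1 > k$, and the second follows with extra slack $q(m-n)\geq 0$. The design principle is that the surplus $2(i+1)$ must be distributed evenly between the two parts, since concentrating it on one part would leave the other part empty (and non-dominating) after a cheap attack.

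For Case 2 ($n-3\leq i\leq n+1$), set $r = n+1-i \in \{0,1,2,3,4\}$, and uniformly lift: place $q+1$ PMUs on every vertex of $X$, $q+1$ PMUs on $m-r$ vertices of $Y$, and $q$ PMUs on the remaining $r$ vertices of $Y$. (This is valid since $m \geq n \geq 4 \geq r$.) A count gives $|S| = q(n+m) + m - 1 + i$, as required. Here $\mu_Y^{(1)} + \mu_Y^{(2)}$ equals $2q$, $2q+1$, or $2q+2$ according as $r\geq 2$, $r=1$, or $r=0$, and a short case check in $r$ confirms $\sigma_X + \mu_Y^{(1)} + \mu_Y^{(2)} \geq k+1$; the symmetric inequality holds with slack $q(m-n) + (m-n+1) \geq 1$.

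The main obstacle is selecting the correct construction for Case 1: any attempt to concentrate the $2(i+1)$ surplus on a single part fails because, when $q=0$, the other part becomes empty and no longer dominates. Once the correct placements are identified, the size computations and the two inequalities both reduce to short algebraic checks that mirror the adversary analysis used in Theorem \ref{thm:bipartite_lower}.
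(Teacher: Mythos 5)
Your proposal is correct, and it follows the same overall strategy as the paper (exhibit an explicit multiset of the stated size and verify robustness against Observation \ref{obs:bip_pow_set}), but both the constructions and the verification differ in genuinely useful ways. In Case 1 the paper distributes the surplus as $q+1$ PMUs on $i+1$ vertices of \emph{each} part and $q$ on the rest, whereas you concentrate the entire surplus $i+1$ on a single vertex per part; both have size $q(n+m)+2(i+1)$ and both are valid, since your "two cheapest attacks" criterion gives $\sigma_X+\mu_Y^{(1)}+\mu_Y^{(2)} = q(n+2)+i+1 = k+1$ exactly (using that $n-1\geq 2$ and $m-1\geq 2$ vertices still carry only $q$ PMUs, so the two smallest $Y$-multiplicities sum to $2q$). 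In Case 2 the paper invokes monotonicity to reduce to the extremal residue $i=n+1$ and takes $q+1$ PMUs everywhere, while you build a tight witness for each residue by shorting $r=n+1-i$ vertices of $Y$ down to $q$; your case check on $\mu_Y^{(1)}+\mu_Y^{(2)}\in\{2q,2q+1,2q+2\}$ is correct. The main thing your write-up buys is the clean reformulation of the $k$-rPDS condition on $K_{n,m}$ as the two inequalities $\sigma_X+\mu_Y^{(1)}+\mu_Y^{(2)}>k$ and $\sigma_Y+\mu_X^{(1)}+\mu_X^{(2)}>k$, which replaces the paper's case analysis on whether $S\setminus B$ meets both parts; note only that this reformulation uses the converse of Observation \ref{obs:bip_pow_set} (that meeting both parts, or covering all but one vertex of a part, suffices to power dominate), which is easy but is stated in the paper only as a necessary condition, and that the "remaining set is empty" failure mode is subsumed because $|S|\geq\sigma_X+\mu_Y^{(1)}+\mu_Y^{(2)}>k$.
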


% A \textit{multiset}, is a collection of items that allow for repition. We may write a multiset as $\{k_1\cdot a_1,\ldots k_n \cdot a_n \}$,
% where $k_i$ is the number of times $a_i$ is repeated.

\begin{proof}
Suppose $V(K_{n,m}) = X\cup Y$ are the parts of $K_{n,m}$, such that $X = \{x_1,\ldots, x_n\}$ and $Y = \{y_1,\ldots, y_m\}$.
Let $k= q(n+2)+i$ for $q\geq 0$ and $0\leq i\leq n+1$. Observe that $i = k-q(n+2) = k - (n+2)\lf\frac{k}{n+2} \rf $.

For the first case, suppose that $0\leq i\leq n-4$.
To show that $\gpk{K_{n,m}} \leq 2(k+1) + (m-n-4)\lf\frac{k}{n+2}\rf$, 
it suffices to construct a $k$-robust power dominating
set of this size. First, we observe that 
\begin{align*}
     2(k+1) + (m-n-4)\lf\frac{k}{n+2}\rf &= k + 2 + k - (n+2)\lf\frac{k}{n+2}\rf +(m-2)\lf\frac{k}{n+2}\rf \\
         &= k+2 + i+ (m-2)\lf\frac{k}{n+2}\rf \\
         &= q(n+2) +i + 2 + i + q(m-2)\\
         &= q(n+2)+2i +2+q(m-2)\\
         &= q(n+m) +2(i+1)
\end{align*} 

     Let 
    \[S = \{x^{q+1}_1, \ldots, x^{q+1}_{i+1},x^{q}_{i+2},\ldots,x^{q}_{n}, 
    y^{q+1}_1, \ldots, y^{q+1}_{i+1},y^{q}_{i+2},\ldots,y^{q}_{m}\}\]
    Observe that $|S| = q(n+m) +2(i+1)$. We will now show that $S$ is a $k$-robust power dominating set. 
    Let $B\subseteq S$ such that $|B| = k = q(n+2)+i$. We have two cases:
     \begin{enumerate}
         \item $S\setminus B$ contains vertices from both $X$ and $Y$. By Observation \ref{obs:bip_pow_set}, 
         $S\setminus B$ is a power dominating set. 
         \item $S\setminus B$ contains vertices only from $X$ (or only from $Y$).
         For generality, call this side $Z$ with size $z$.  
         Observe that $|S\setminus B| = q(n+m) +2(i+1) - q(n+2) -i = q(m-2)+i+2$. 
         By Observation \ref{obs:bip_pow_set}, for $S\setminus B$ to be a robust power dominating set, it must have PMUs on at least $z-1$ vertices. Assume for contradiction that at most
         $z-2$ vertices of $Z$ have  PMUs, then,
         \begin{align*}
            |S\setminus B|&\leq (q+1)(i+1) +q(z-3-i)\\ 
            &\leq (q+1)(i+1) +q(m-3-i) \\
            &= qi+i+q+1+qm-3q-qi\\
            &=q(m-2)+i +1 \\
            &< |S\setminus B|,
         \end{align*} a contradiction.
     \end{enumerate}
     Thus $S\setminus B$ is a power dominating set, and since $B$ was arbitrary, 
     $S$ is a $k$-robust power dominating set for $K_{n,m}$.

For the second case, suppose that $n-3\leq i\leq n+1$.
 We now show that $\gpk{K_{n,m}}\leq k+m-1 +(m-2)\lf \frac{k}{n+2}\rf$ by constructing a 
 $k$-robust power dominating set
 of size $k+m-1 +(m-2)\lf \frac{k}{n+2}\rf$. By Proposition \ref{prop:basicbounds}, we need only provide the construction for
  $i = n+1$. Then,
  \begin{align*}
     k+m-1 +(m-2)\lf \frac{k}{n+2}\rf &= q(n+2)+n+1 +m-1 + q(m-2)\\
     &= q(n+m) +(n+m)\\
     &= (q+1)(n+m)
  \end{align*}
  Let $S = \{x^{q+1}_1, \ldots, x^{q+1}_n, y^{q+1}_1, \ldots, y^{q+1}_m\}$.
  Observe that $|S| = (q+1)(n+m)$. We will now show that $S$ is a $k$-robust power dominating set. 
  Let $B\subseteq S$ such that $|B| = k = q(n+2)+n+1$. We have two cases:
  \begin{enumerate}
      \item $S\setminus B$ contains vertices from both $X$ and $Y$. By Observation \ref{obs:bip_pow_set}, $S\setminus B$ is a power
      dominating set. 
      \item $S\setminus B$ contains vertices only from $X$ or only from $Y$. For generality, call this side $Z$ with size $z$.
      Observe that $|S\setminus B| =  (q+1)(n+m) -q(n+2) -n -1 = qm +m -2q -1 = q(m-2) +m -1$. 
      By Observation \ref{obs:bip_pow_set}, for $S\setminus B$ to be a robust power dominating set, it must have PMUs on at least $z-1$ vertices. Assume for contradiction that at most
      $z-2$ vertices of $Z$ have  PMUs, then, 
      \begin{align*}
        |S\setminus B| &\leq  (q+1)(z-2) \\
        &\leq (q+1)(m-2) \\
        &\leq qm +m-2q-2 \\
        &< qm+m-2q -1\\
        &= |S\setminus B|
      \end{align*}
      a contradiction.
     \end{enumerate}
     Thus $S\setminus B$ is a power dominating set, and since $B$ was arbitrary, 
     $S$ is a $k$-robust power dominating set for $K_{n,m}$.
  Thus $S$ is a $k$-robust power dominating set for $K_{n,m}$.
            \end{proof}
    
We can combine Theorem \ref{thm:bipartite_lower} and Theorem \ref{thm:bipartite_upper} to find a complete characterization for balanced complete bipartite graphs, as shown in the following corollary. Moreover, the proof of Theorem \ref{thm:bipartite_upper} yields the construction of $k$-robust power dominating sets for $K_{n,n}$        
    
    \begin{cor}{\label{bipartite_balanced}}
        Let $n\geq 4$, and $k\geq 1$ 
    \[\gpk{K_{n,n}} =
    \begin{cases}
        2(k+1)- 4\lf \frac{k}{n+2} \rf & k \equiv i \Mod{n+2},\, \, 0\leq i\leq n-3\\
        k+n-1 +(n-2)\lf \frac{k}{n+2}\rf  & \text{otherwise.}
    \end{cases}\]
    \end{cor}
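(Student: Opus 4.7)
The plan is to derive the corollary as a direct specialization of Theorem \ref{thm:bipartite_lower} and Theorem \ref{thm:bipartite_upper} to the case $m=n$. Both theorems apply since their hypothesis $m\geq n\geq 4$ is satisfied, and with $m=n$ the lower bound from Theorem \ref{thm:bipartite_lower} and the upper bound from Theorem \ref{thm:bipartite_upper} collapse to the same expressions, leaving only a small bookkeeping issue at one boundary index.

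First I would apply Theorem \ref{thm:bipartite_lower} with $m=n$ to obtain
\[\gpk{K_{n,n}}\geq \begin{cases} 2(k+1)-4\lf \frac{k}{n+2}\rf & 0\leq i\leq n-4\\ k+n-1+(n-2)\lf\frac{k}{n+2}\rf & n-3\leq i\leq n+1\end{cases}\]
where $k\equiv i\Mod{n+2}$. Next I would substitute $m=n$ into Theorem \ref{thm:bipartite_upper}: the first-case upper bound $2(k+1)+(m-n-4)\lf\frac{k}{n+2}\rf$ reduces to $2(k+1)-4\lf\frac{k}{n+2}\rf$, and the second-case upper bound $k+m-1+(m-2)\lf\frac{k}{n+2}\rf$ reduces to $k+n-1+(n-2)\lf\frac{k}{n+2}\rf$. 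In each of the two regimes the lower and upper bounds coincide, producing equality.

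The only real obstacle is a minor reconciliation of the partition of cases. The corollary groups the index $i=n-3$ with the first case (formula $2(k+1)-4\lf\frac{k}{n+2}\rf$), whereas Theorems \ref{thm:bipartite_lower} and \ref{thm:bipartite_upper} both put $i=n-3$ in their second case (formula $k+n-1+(n-2)\lf\frac{k}{n+2}\rf$). To close this gap I would substitute $k=q(n+2)+(n-3)$ into both expressions and show each simplifies to $2qn+2n-4$; this verifies that the two formulations agree at $i=n-3$, so the corollary's piecewise description is consistent with the theorems' piecewise description. Once this one-line algebra check is recorded, the proof amounts to simply citing the two theorems and noting that the construction in the proof of Theorem \ref{thm:bipartite_upper}, specialized to $m=n$, yields the claimed extremal $k$-rPDS of $K_{n,n}$.
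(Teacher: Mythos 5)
Your proposal is correct and matches the paper's approach exactly: the paper gives no separate proof, simply stating that the corollary follows by combining Theorems \ref{thm:bipartite_lower} and \ref{thm:bipartite_upper} with $m=n$, which is precisely what you do. Your explicit check that both formulas evaluate to $2qn+2n-4$ at $i=n-3$ (reconciling the corollary's case split, which places $i=n-3$ in the first case, with the theorems', which place it in the second) is a detail the paper leaves implicit, and it is verified correctly.
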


\section{Trees}\label{sec:trees}
In this section, we establish the $k$-robust power domination number for trees. A \emph{tree} is an acyclic connected graph. A \emph{spider} is a tree with at most one vertex of degree 3 or more. A \emph{spider cover} of a tree $T$ is a partition of $V$, say $\{V_1,\ldots, V_\ell\}$ such that $G[V_i]$ is a spider for all $i$. The \emph{spider number} of a tree $T$, denoted by $\sp{T}$, is the minimum number of partitions in a spider cover. A \emph{rooted tree} is a tree in which one vertex is called the \textit{root}. Suppose two vertices $u$ and $v$ are in a rooted tree with root $r$. If $u$ is on the $r-v$ path, we say that $v$ is a \textit{descendant} of $u$ an $u$ is an \textit{ancestor} of $v$. If $u$ and $v$ are also neighbors, $v$ is a \emph{child} of $u$ and $v$ is the \emph{parent} of $u$. The vertex $u$ is an \textit{ancestor} of $v$ if $v$ is a descendant of $u$.

\begin{thm}{\rm \cite[Theorem 12]{hhhh02}}\label{thm:treeshhhh02}
For any tree $T$, $\gp{T} = \sp{T}$. 
\end{thm}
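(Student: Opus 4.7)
The plan is to prove both inequalities $\gp{T} \leq \sp{T}$ and $\gp{T} \geq \sp{T}$ separately, using the structure of spiders for the upper bound and the structure of forcing chains for the lower bound.

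For $\gp{T} \leq \sp{T}$, I would take an arbitrary minimum spider cover $\{V_1,\ldots,V_\ell\}$ of $T$ and build a power dominating set $S$ of size $\ell$ by selecting one vertex $s_i$ from each $V_i$: specifically, the (unique) degree-$\geq 3$ vertex of the spider $T[V_i]$ when it exists, and otherwise an endpoint of the path $T[V_i]$ (with any vertex chosen when $|V_i|=1$). After the domination step, each closed neighborhood $N[s_i]$ in $T$ is observed. I would then argue that all remaining vertices can be forced along the legs of each spider, processed from the center outward. The legs of $T[V_i]$ are induced paths in $T[V_i]$, so the only concern is that a leg vertex $v\in V_i$ might have extra neighbors in $T$ lying in some $V_j$ with $j\neq i$. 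Because $T$ is acyclic and the $V_j$ form a partition, one can show by induction on the distance from the center that when it is $v$'s turn in the forcing order, all of its neighbors outside its own leg have already been observed (either by the domination step from $s_j$, or by forcing originating earlier along some other spider's leg), leaving exactly one unobserved neighbor to force.

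For $\gp{T} \geq \sp{T}$, I would start with a minimum power dominating set $S$ of $T$ and construct a spider cover with $|S|$ parts. Fix an execution of the power domination process; for each $s\in S$, define $V_s$ to consist of $s$ together with every vertex that enters $B$ via a forcing chain originating at a vertex of $N[s]$, assigning each forced vertex to the unique $s$ whose chain created it. Acyclicity of $T$ guarantees that different chains cannot merge, so the sets $\{V_s\}_{s\in S}$ partition $V(T)$. I would then check that $T[V_s]$ is a spider: its only candidate for a vertex of degree $\geq 3$ is $s$ itself, since every other vertex $v\in V_s$ has at most two neighbors in $V_s$, namely the predecessor and successor of $v$ in its forcing chain (any extra neighbor of $v$ in $T$ must lie in a different $V_{s'}$ by the uniqueness of the chain that forced $v$).

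The main obstacle is the upper bound direction. The delicate point is that the spider structure is defined on induced subgraphs while forcing depends on degrees in the full tree, so one must carefully order the forcings across different spiders to guarantee that a leg vertex always has exactly one unobserved neighbor when its turn comes. I expect this to be handled by rooting each spider at its chosen $s_i$ and processing vertices in order of increasing distance from the $s_i$'s simultaneously across all spiders; acyclicity of $T$ is what prevents conflicts, and it is the one spot where a careful case analysis (leaf vertex of a spider versus interior leg vertex, and extra cross-spider edges) cannot be avoided.
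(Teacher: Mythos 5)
This theorem is not proved in the paper at all; it is quoted from Haynes et al.\ \cite{hhhh02}, so there is no internal proof to compare against, and your attempt has to be judged on its own. Your overall architecture (two inequalities, converting a spider cover into a power dominating set and a forcing execution into a spider cover) is the standard one. The lower bound direction is essentially sound: forcing chains are induced paths, acyclicity of $T$ rules out chords, merges, and edges between distinct legs, so each $T[V_s]$ is a spider centered at $s$. The only bookkeeping you are missing there is that a vertex of $N[s]\cap N[s']$ must be assigned, together with the entire chain it originates, to a single part, so that each $T[V_s]$ remains connected.

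The gap is in the upper bound. The statement you need is true (indeed for an arbitrary spider cover, not just a minimum one), but the invariant you propose --- ``process vertices in order of increasing distance from the $s_i$'s simultaneously across all spiders'' --- is false. Take the tree with $a_0$ adjacent to $a_1,\alpha_1,\alpha_2$; $a_1$ adjacent to $a_2$ and to $b_3$; $b_0$ adjacent to $b_1,\beta_1,\beta_2$; and the path $b_1b_2b_3$. Then $V_1=\{a_0,\alpha_1,\alpha_2,a_1,a_2\}$, $V_2=\{b_0,\beta_1,\beta_2,b_1,b_2,b_3\}$ is a minimum spider cover with centers $a_0,b_0$, and $\{a_0,b_0\}$ is a power dominating set; however $a_2$, at distance $2$ from its center, cannot be forced until $b_3$, at distance $3$ from its center, has been observed, since until then $a_1$ has the two unobserved neighbors $a_2$ and $b_3$. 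So no round-by-round distance induction can succeed. What actually closes the argument is a stall analysis: if the process halts with a nonempty unobserved set $U$, then in each part meeting $U$ the unobserved vertex closest to its center has an observed leg-predecessor that is blocked only by an unobserved cross-neighbor lying in another part meeting $U$; following these pointers from part to part must eventually cycle, and because each part induces a connected subtree of $T$, either two distinct edges between two parts or a closed walk of cross edges through three or more parts produces a cycle in $T$, a contradiction. That is where acyclicity is genuinely used --- to forbid a deadlock among parts, not to certify an ordering by distance. I would replace your induction with this argument (or, equivalently, an induction on the number of parts that peels off a leaf of the part-adjacency tree).
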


     {
        \begin{thm}\label{thm:trees}
            For any tree $T$, $\gpk{T} = (k+1)\sp{T}$.
        \end{thm}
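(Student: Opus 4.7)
The upper bound $\gpk{T}\leq (k+1)\sp{T}$ is immediate from Proposition \ref{prop:basicbounds} together with Theorem \ref{thm:treeshhhh02}: take $k+1$ copies of a minimum power dominating set of $T$. The substance of the proof lies in the lower bound $\gpk{T}\geq (k+1)\sp{T}$. I would prove it by induction on $\sp{T}$. The base case $\sp{T}=1$ means $T$ is itself a spider, so $\gp{T}=1$, and Observation \ref{obs:comparing}(3) gives $\gpk{T}=k+1=(k+1)\sp{T}$.

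For the inductive step with $\sp{T}=\ell\geq 2$, the plan is to ``peel off'' a suitable leaf spider from a well-chosen minimum spider cover. Fix a minimum spider cover $\{V_1,\ldots,V_\ell\}$ of $T$; its quotient graph (with one node per $V_i$ and edges induced by the edges of $T$ crossing parts) is itself a tree on $\ell\geq 2$ nodes and so has a leaf $V_1$ attached to the rest of $T$ through a unique edge $uv$ with $u\in V_1$ and $v\in V_2$. Set $T'=T[V(T)\setminus V_1]$; since $V_1$ was pendant in the quotient, $T'$ is a connected tree with $\sp{T'}=\ell-1$ (if $\sp{T'}<\ell-1$ then reattaching $V_1$ as a spider gives a cover of $T$ of size $<\ell$, contradicting minimality). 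The key claim is that any $\gpkset$ $S$ of $T$ satisfies $|S\cap V_1|\geq k+1$. Granted this, the restriction $S'$ of $S$ to $V(T')$ is a $k$-rPDS of $T'$ with $|S'|\leq |S|-(k+1)$, so the inductive hypothesis yields $|S|\geq (k+1)+(k+1)(\ell-1)=(k+1)\ell$.

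To prove the key claim, I would argue by contradiction: if $|S\cap V_1|\leq k$, choose a submultiset $F$ of $S$ of size $k$ containing all PMUs inside $V_1$. Then $S\setminus F$ must be a PDS of $T$ yet places no PMU in $V_1$, so any propagation into $V_1$ must enter through $u$. If we arrange the cover so that $u$ has at least two neighbors inside $V_1$, then when $u$ is observed, it has $\geq 2$ unobserved neighbors in $V_1$ and cannot force—stalling the chain so that some vertex of $V_1$ is never observed, a contradiction.

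The main obstacle is the structural lemma that one can always select a minimum spider cover whose leaf spider has ``branching'' at its attachment vertex: a pendant path attached at one of its endpoints would be a leaf spider observable from outside, which would defeat the argument. I would handle this by a bottom-up greedy construction of the cover—rooting $T$ at a branching vertex, processing vertices in postorder, and showing that in a minimum cover every leaf spider in the quotient is either centered at a branching vertex of $T$ or, if it is a path-spider, is attached through an internal vertex of that path—because otherwise the pendant path could be absorbed as an arm of its adjacent spider, contradicting minimality of $\ell$. A secondary subtlety is verifying that $S'$ is genuinely a $k$-rPDS of $T'$: for any size-$k$ submultiset $F'\subseteq S'$, the set $S\setminus F'$ is a PDS of $T$, and since $F'\cap V_1=\emptyset$ all $\geq k+1$ PMUs in $V_1$ remain and observe $V_1$ independently. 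This lets the process decouple at the cut vertex $u$, so the induced process on $T'$ from $S'\setminus F'$ alone still observes all of $V(T')$.
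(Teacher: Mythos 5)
Your upper bound and base case are fine, but the inductive step has a genuine gap: the ``secondary subtlety'' you defer to the end --- that the restriction $S'$ of $S$ to $V(T')$ is a $k$-rPDS of $T'$ --- is false, because the power domination process does \emph{not} decouple at the cut edge: observation can flow \emph{out of} $V_1$ into $T'$ and be essential for observing $T'$. Concretely, let $T'$ be a spider with center $c$ and three legs of length three, with vertices $c,a_i,b_i,d_i$ in order ($i=1,2,3$); let $V_1$ be another spider with center $u$ and three legs of length three; and join $u$ to the leaf $d_1$. Then $\sp{T}=2$, and $V_1$ is a leaf part of the minimum cover $\lb V_1,V(T')\rb$ whose attachment vertex $u$ has three neighbors inside $V_1$, exactly as your structural lemma requires. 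Place $k+1$ PMUs on each of $u$ and $b_2$. This is a $\gpkset$ of size $2(k+1)$: after any $k$ failures each of $u$ and $b_2$ retains a PMU, and $\{u,b_2\}$ power dominates $T$ because the chain of forces $d_1\to b_1\to a_1\to c$ leaves $c$ with a single unobserved neighbor, so $c$ finishes the third leg. But the restriction to $T'$ consists of $k+1$ PMUs on $b_2$ alone, and $\{b_2\}$ is not even a power dominating set of $T'$ ($c$ is forced by $a_2$ and then has two unobserved neighbors $a_1,a_3$). So you cannot invoke the inductive hypothesis on $S'$; you would need to \emph{relocate} the PMUs of $V_1$ into $T'$ and re-verify the $k$-rPDS property, which is where the real work lies. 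A secondary problem is your justification of the structural lemma: a pendant path-part attached at its endpoint to a vertex $v\in V_2$ cannot be ``absorbed as an arm'' of $V_2$ when $v$ is an internal vertex of a leg of $V_2$, since the union would then acquire a second vertex of degree at least $3$ and fail to be a spider.

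For comparison, the paper avoids the spider-cover decomposition entirely. It first notes that a $\gpkset$ must contain at least $\sp{T}$ distinct vertices, so a set of size less than $(k+1)\sp{T}$ has a vertex carrying at most $k$ PMUs; it then roots $T$, picks a deepest such deficient vertex $v$, and pushes all of $v$'s PMUs up to the nearest ancestor of degree at least $3$, proving that the resulting multiset is still a $k$-rPDS. Iterating this exchange contradicts an extremal choice of $S$ (one minimizing the number of deficient vertices). That relocation-and-reverification argument is precisely the machinery your restriction step is missing, so the two approaches are genuinely different and yours, as written, does not close.
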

        \begin{proof}
            By Proposition \ref{prop:basicbounds}, we have that $\gpk{T} \leq (k+1)\gp{T}$ for any tree $T$, and by Theorem \ref{thm:treeshhhh02} $\sp{T} = \gp{T}$. 
            Therefore, $\gpk{T} \leq (k+1)\sp{T}$.

             To prove that $\gpk{T} \geq (k+1)\sp{T}$, we proceed by contradiction. That is,
            assume that $\gpk{T}<(k+1)\sp{T}$. Since $\sp{T} = \gp{T}$, any $\gpkset$ must contain at least $\sp{T}$ distinct vertices and by the pigeonhole principle there exists at least one vertex in the set with at most $k$ PMUs. 
            
            Let $S$ be a $\gpkset$ of $T$ such that $\deg{v}{} \geq 3$ for each $v\in S$, and choose $S$ to have the smallest number of vertices $x$ with $\pmus{x}\leq k$. 
             Root $T$ at a vertex $r\in S$.
            
             Let $A = \{v\in S : 0<\pmus{v} \leq k \}$. Let $v\in A$ with the property that $d(v,r)= \max\{d(u,r): u \in A\}$. 
            Observe that for any descendant $u$ of $v$, then $\pmus{u} \geq k+1$ or $\pmus{u} = 0$. Let $w$ be the nearest ancestor of $v$ such that $\deg{w}{} \geq 3$.
            Let $S' = (S\cup\{w^{\pmuss{S}{v}}\}) \setminus \{v^{\pmuss{S}{v}}\}$, so that for each $x\in S'$ such that $x\neq w$, $\pmuss{S'}{x} = \pmuss{S}{x}$ and $\pmuss{S'}{w} = \pmuss{S}{v}+ \pmuss{S}{w}$.
            That is, $S'$ is the same as $S$ with the exception that the PMUs on $v$ have been moved to $w$. 
            
            We will show that $S'$ is a $k-$robust power dominating set. First observe that since $S\setminus \{v^{\pmuss{S}{v}}\}$ is a power dominating set,
            $S'$ is also a power dominating set. 
            Let $B'\subseteq S'$, such that $|B'|=k$. We have two cases: $\pmuss{B'}{w}\leq \pmuss{S}{v}$, or
            $\pmuss{B'}{w}> \pmuss{S}{v}$.
            
            First, assume that $\pmuss{B'}{w}\leq \pmuss{S}{v}$. Let 
            %$B = (B'\setminus \{ \pmuss{B'}{w}\cdot w,\pmuss{B'}{v}\cdot v \}) \cup \{ \pmuss{B'}{w}\cdot v\}$. 
            $$B = (B'\setminus \{ w^{\pmuss{B'}{w}} \}) \cup \{ v^{\pmuss{B'}{w}}\}.$$
            Note that $S\setminus B$
            removes the same PMUs as $S'\setminus B'$ with the exception that the PMUs removed from $w$ in $S'\setminus B'$, are instead removed from $v$ in $S\setminus B$.
             Since $\pmuss{B'}{w}\leq \pmuss{S}{v}$, we see that $\pmuss{S\setminus B}{w}=0$.
            
             Now, if $w \notin S'\setminus B'$, then $S\setminus B$ = $S'\setminus B'$. Thus $S'\setminus B'$ is
            a power dominating set. Next consider the case where $w \in S'\setminus B'$. 
            %$v\notin S\setminus B$, then $S\setminus B$ = $S'\setminus B'$, thus $S'\setminus B'$ is a power dominating set. 
            %Next consider the case 
            %where $v\in S\setminus B$, which implies that $w\in S'\setminus{B'}$.
            The ancestor $w$, which is observed by virtue of being in $S'\setminus B'$, will cause the observation of $v$ and $v$ can perform a
            zero forcing step to observe one descendant. Since $\deg{v}{} \geq 3$, all the descendants of $v$, except for possibly one, must be observed by forces from 
            descendants of $v$ in $S'\setminus{B'}$.
            To see this, recall that $S\setminus \{v^{\pmuss{S}{v}}\}$ is a power dominating set. Since $v$ is not in the power dominating set, 
            $v$ can only perform the zero forcing step.
            Since the only path from non-descendants of $v$ to descendants of $v$ is through $v$, all descendants,
            except for possibly one descendant, must be observed by descendants of $v$.
            Since $\pmuss{S\setminus B}{x} =\pmuss{S'\setminus B'}{x}$ whenever $x$ is a descendant of $v$, 
            $S'\setminus {B'}$ will force all the descendants of $v$.
            
            Thus, any vertices that rely on $\pmuss{S\setminus B}{v}>0$ to be observed in $S\setminus{B}$, will have been observed by $w$ or the descendants of $v$. 
            The remaining vertices will be observed by the same vertices as they would have been by $S\setminus{B}$. Therefore, $S'\setminus{B'}$ is a power dominating set.
    
            For the second case, assume that $\pmuss{B'}{w}> \pmuss{S}{v}$. Let 
            %$B = (B'\setminus\{\pmuss{B'}{w}\cdot w,\pmuss{B'}{v}\cdot v \})\cup \{\pmuss{S}{v}\cdot v, (\pmuss{B'}{w}- \pmuss{S}{v})\cdot w \}$. 
            $$B = (B'\setminus \{w^{\pmuss{S}{v}} \})\cup \{v^{\pmuss{S}{v}} \}.$$ 
            Note that $S\setminus B$ removes the same PMUs as $S'\setminus B'$ with the exception that any PMUs removed from $w$ are first removed
            from $v$, and then the remaining are removed from $w$. Thus, $v\notin S\setminus B$, $S'\setminus B' = S\setminus B$, and therefore $S'\setminus B'$ is a power dominating set.
    
            In either case, $S'\setminus B'$ is a power dominating set. If $w\in S$, then we have found $S'$ with fewer vertices for which $\pmus{x} \leq k$ for $x \in S'$, which is a contradiction. If $w\not\in S$, 
            we may repeat the process. Since $r\in S$, we know that eventually the process terminates with the same contradiction. 
    \end{proof}
    % \begin{figure}[h]
    %     \begin{center}
    %     \input{tree_pic}
    %     \caption{An example of how we move PMUs from one vertex to another in the proof for trees. Suppose $s\leq k$ counts the number of vertices that are being moved, and the filled in vertices have PMUs.}
    %     \end{center}
    %     \label{img:trees_proof}
    % \end{figure}
    }

    \section{Concluding remarks}\label{sec:concrem}
    
    PMU-defect-robust power domination allows us to place multiple PMUs at the same location and consider the consequences if some of these PMUs fail. There are many questions left to examine in future work.
    
    Is there an improvement to the lower bound given in Proposition \ref{prop:basicbounds} for $\gp{G}>1$? As $K_{3,3}$ demonstrates in Theorem \ref{thm:k33}, it seems likely that there is a better lower bound based on the number of vertices and the power domination number that utilizes the pigeonhole principle to show that the lower bound must increase at certain values of $k$.
    
    We have begun the study of $k$-robust power domination for certain families of graphs but work remains to be done. We have determined the $k$-robust power domination number for trees. For complete bipartite graphs, we still have the case of $\gpk{K_{3,b}}$ for $4\leq b <\left\lfloor \frac{k}{3} \right\rfloor +3$. The question of $\gpk{K_{a,b}}$ for unbalanced complete bipartite graphs when $a,b\geq 4$ is also open and preliminary observations indicate an extensive case analysis for this problem. 
    
    %%%%%%%%%%%%%%%%%%%%%%%%%%%%%%%%%%%%%%%%%%%%%%%%%%%%%%%%%%%%%%%%%%%%%%%%%%%%%%%%%%%%%
    
    \section*{Acknowledgments}
    
   B. Bjorkman was supported by the US Department of Defense's Science, Mathematics and Research for Transformation (SMART) Scholarship for Service Program. E. Conrad was supported by the Autonomy Technology Research (ATR) Center Summer Program.
    
    %\renewcommand{\bibsection}{\section*{\centering References}}
    %\putbib[Body/ch3/kfault]

\bibliography{kfault}{}
\bibliographystyle{plain}
\end{document}